\def\cB{\mathcal{B}}
\def\cH{\mathcal{H}}
\def\cF{\mathcal{F}}
\def\cO{\mathcal{O}}
\def\cC{\mathcal{C}}
\def\cD{\mathcal{D}}
\def\cN{\mathcal{N}}
\def\cG{\mathcal{G}}
\def\cGL{\mathcal{G}\mathcal{L}}
\def\cS{\mathcal{S}}
\def\cT{\mathcal{T}}
\def\R{\mathbb R}
\def\T{\mathbb T}
\def\Z{\mathbb Z}
\def\C{\mathbb C}
\def\N{\mathbb N}
\def\b0{\mbox{\boldmath $0$}}
\newtheorem{prop}{Proposition}[section]
\newtheorem{thm}[prop]{Theorem}
\newtheorem{rem}[prop]{Remark}
\newtheorem{lemm}[prop]{Lemma}
\newtheorem{cor}[prop]{Corollary}
\theoremstyle{definition}
\newtheorem{definition}[prop]{Definition}
\title{Decomposing the wavelet representation for shifts by wallpaper groups}
\author{Lawrence W. Baggett}
\address{Department of Mathematics, University of Colorado}
\email{baggett@colorado.edu}
\author{Kathy D. Merrill}
\address{Department of Mathematics, Colorado College}
\email{kmerrill@coloradocollege.edu}
\author{Judith A. Packer}
\address{Department of Mathematics, University of Colorado}
\email{packer@Colorado.edu}
\author{Keith F. Taylor}
\address{Department of Mathematics \& Statistics, Dalhousie University}
\email{keith.taylor@dal.ca}
\begin{document}

\maketitle

\begin{abstract}
The wavelet group and wavelet representation associated with shifts coming from a two dimensional crystal symmetry group
$\Gamma$
and dilations by powers of 3, are defined and studied. The main result is an explicit decomposition of this 
$3\Gamma$-wavelet representation into irreducible representations
of the wavelet group.
\end{abstract}

\section{Introduction}
In \cite{mt} and \cite{gcm}, the classical concept of wavelets on $\R^n$ was
modified by replacing shifts by points from a lattice in $\R^n$
with ``shifts'' by the isometries from a crystal symmetry group, $\Gamma$.
In this generalization, the dilation matrix $A$
must be compatible with the action of $\Gamma$.
For $\gamma\in\Gamma$, let $R(\gamma)$ denote
the unitary shift operator on the Hilbert space
$L^2(\R^n)$ defined by
$R(\gamma)f(y)=f(\gamma^{-1}\cdot y)$,
for all $y\in\R^n$ and $f\in L^2(\R^n)$. Here
$\gamma^{-1}\cdot y$ denotes the result of
shifting $y$ by the isometry $\gamma^{-1}$. When
$\Gamma$ is the smallest crystal symmetry group 
consisting of translations by elements of $\Z^n$, then
$R$ gives the classical shifts.
The dilation unitary, $D_A$, is given by
$D_Af(y)=|\det(A)|^{1/2}f(Ay)$, for all $y
\in\R^n$ and $f\in L^2(\R^n)$.
 The purpose of the present paper is to initiate a study of the operator algebraic and
group representational structures 
of
$\cG(A,\Gamma)$, the smallest group of 
unitary operators containing both $D_A$ and
$R(\Gamma)$, in the case where $\Gamma$ is a wallpaper group;
that is, a symmetry group of a two dimensional crystal. We
take the point of view, that $\cG(A,\Gamma)$ is the
image of a unitary representation of a particular group
we define in section 2.

This paper partly generalizes \cite{lpt},
where such a study was carried out when the
shifts were from the integer lattice, $\Z^n$. In that case,
the corresponding representation was shown to be unitarily
equivalent to an explicit direct integral of irreducible 
representations. The parameter space for the direct integral
can be taken to be a wavelet set in the sense of \cite{dl} and
\cite{dls}. In \cite{m1}, \cite{m2}, and \cite{m3}, 
wavelet sets with very simple geometric structure (a finite union
of convex sets) are constructed for increasingly more general dilation matrices. The abstract group being represented is not
a Type I group. Thus, it was interesting that its natural
representation on $L^2(\R^n)$ can be so nicely decomposed into
irreducibles. See \cite{cmo}, \cite{dut}, \cite{dhjp}, and
\cite{ds} for works partially motivated by the direct integral
decomposition obtained in \cite{lpt}. 

Although much of what we do below can be carried out in arbitrary dimensions, we restrict ourselves to the two dimensional case
where computational details are more manageable. 
We also note that the analog of simple
wavelet sets for shifts by a crystal symmetry group have
recently been constructed, see \cite{KM}, for all two dimensional crystal groups and an appropriate dilation matrix.
Besides restricting to two dimensions, we further reduce
notational details by using dilation by 3, which is compatible with all wallpaper groups, in all cases.

After introducing our notation and basic definitions in Section 2, we construct a particular semi-direct product
group, denoted $\Gamma_3\rtimes_{\vartheta}\Z$, in Section 3 and define the $3\Gamma$-wavelet 
representation of $\Gamma_3\rtimes_{\vartheta}\Z$. 
This is the unitary 
representation whose image is $\cG(A,\Gamma)$, where
the matrix $A$ dilates by 3. In Section 4, we give an
explicit construction of a family of irreducible
unitary representations of $\Gamma_3\rtimes_{\vartheta}\Z$.
In the final section, we display the $3\Gamma$-wavelet 
representation of $\Gamma_3\rtimes_{\vartheta}\Z$ as the
direct integral of the irreducible representations
constructed in Section 4.

\section{Notation and Basic Results}
We have selected notation to emphasize the central role of the natural unitary representation of the group of affine transformations of $\R^n$ and so that, as much as possible, we are
consistent with the notation of \cite{mt} and \cite{lpt}.

Let $n\in\N$ and let $\cGL_n(\R)$ denote the group of invertible linear
 transformations of $\R^n$. Let ${\rm id}$ denote the
 identity in $\cGL_n(\R)$. For any
$x\in\R^n$ and $L\in \cGL_n(\R)$, define the
transformation $[x,L]$ of $\R^n$ by
\[
[x,L]z=L(z+x),\quad\text{for all}\,\,z\in\R^n.
\]
Let ${\rm Aff}(\R^n)=\{[x,L]:x\in\R^n, L\in 
\cGL_n(\R)\}$. Under composition as group product, 
${\rm Aff}(\R^n)$ is the group of all
invertible
{\it affine transformations} of $\R^n$.
Note that, for $[x,L],[y,M]\in{\rm Aff}(\R^n)$, 
\[
[x,L]\big([y,M]z\big)=
L\big(M(z+y)+x\big)=LM\big(z+(M^{-1}x+y)\big)=
[M^{-1}x+y,LM]z,
\]
 for all $z\in\R^n$.
Thus, $[0,{\rm id}]$ is the identity in
${\rm Aff}(\R^n)$,
\[
[x,L][y,M]=[M^{-1}x+y,LM],\quad\text{and}\quad
[x,L]^{-1}=[-Lx,L^{-1}].
\]

We note that the action of ${\rm Aff}(\R^n)$ on $\R^n$ provides
a natural unitary representation of 
${\rm Aff}(\R^n)$ on $L^2(\R^n)$, which we will denote by
$R$ regardless of the dimension $n$. That is, for 
$[x,L]\in{\rm Aff}(\R^n)$ and $g\in L^2(\R^n)$,
\begin{equation}\label{nat_rep}
R[x,L]g(y)=|\det(L)|^{-1/2}g\big([x,L]^{-1}y\big))=
|\det(L)|^{-1/2}g(L^{-1}y-x),\quad\text{for all  }
y\in\R^n.
\end{equation}
We will refer to $R$ as the \emph{natural 
representation}.

Let ${\rm Trans}(\R^n)=\{[x,{\rm id}]:x\in\R^n\}$,
the normal subgroup of ${\rm Aff}(\R^n)$
consisting of pure translations.
Define $q:{\rm Aff}(\R^n)\to\cGL_n(\R)$ by
$q[x,L]=L$, for all $[x,L]\in {\rm Aff}(\R^n)$. Then
$q$ is a homomorphism onto $\cGL_n(\R$ with 
$\ker(q)={\rm Trans}(\R^n)$. We note that the restriction of the natural representation $R$ to ${\rm Trans}(\R^n)$ 
gives us the usual translation unitaries. That is, if
$x\in\R^n$, then  $R[x,{\rm id}]=T_x$, where
$T_xf(y)=f(y-x)$, for all $y\in\R^n$ and $f\in L^2(\R^n)$.

Let $\cO_n$ denote the group of orthogonal 
transformations of $\R^n$ and let 
${\rm Iso}(\R^n)$ denote the subgroup
of ${\rm Aff}(\R^n)$ consisting of all
affine transformations of the form $[x,L]$,
with $x\in\R^n$ and $L\in\cO_n$. These are the rigid
motions of $\R^n$.
An $n$-dimensional crystal group is a discrete
subgroup $\Gamma$ of ${\rm Iso}(\R^n)$ such
that the quotient space $\R^n/\Gamma$ is
compact. We refer to a $2$-dimensional crystal
group as {\it wallpaper group}. We will restrict
our attention to wallpaper groups from now on.

Let $\Gamma$ be a fixed wallpaper group. Let
$\cN={\rm Trans}(\R^2)\cap\Gamma$, the pure translations in $\Gamma$. Then $\cN$ is a 
normal subgroup of $\Gamma$. There are two
linearly independent vectors $u,v\in\R^2$
such that $\cN=\{[ju+kv,{\rm id}]:(j,k)\in\Z^2\}$.
Therefore $\cN$ is isomorphic to the lattice 
$N_\Gamma=\{ju+kv:(j,k)\in\Z^2\}$ in $\R^2$
and, hence, isomorphic to $\Z^2$. Let
\[
\cD=q(\Gamma)=\{L\in\cO_2:[x,L]\in\Gamma,\,\,\text{for
some}\,\,x\in\R^2\}.
\]
Then $\cD$ is a finite subgroup of $\cO_2$ called the
{\it point group} of $\Gamma$. 
There are 10 possibilities for $\cD$ and they
separate into two types depending on 
whether or not the group contains a reflection.
If there is no reflection, then $\cD$ is either
the trivial group or generated by a rotation
of order 2, 3, 4, or 6. 
Let $\rho(\theta)$ denote rotation through the angle $\theta$.
If $k\in\{1,2,3,4,6\}$, 
then the cyclic group of order $k$ is 
$\cC_k=\{\rho(2\pi/k)^{\,j}:0\leq j\leq k-1\}$. Note that
$\cC_1$ is the trivial group. If $\cD$
contains a reflection $S$ about some one
dimensional subspace of $\R^2$, then there
is a $k\in\{1,2,3,4,6\}$ such that
\[
\cD=\cD_k=\{\rho(2\pi/k)^{\,j}:0\leq j\leq k-1\}
\cup\{\rho(2\pi/k)^{\,j}S:0\leq j\leq k-1\}.
\]

If we denote the restriction of $q$ to $\Gamma$ by $q$ again, then $q$ 
is a homomorphism of $\Gamma$ onto
$\cD$ and $\ker(q)=\cN$. Thus $\cD$ is isomorphic
with $\Gamma/\cN$. If $[0,L]\in\Gamma$, for all
$L\in\cD$, then $\Gamma$ is called {\it 
symmorphic}. Otherwise, $\Gamma$ is 
{\it nonsymmorphic} and there is no subgroup
of $\Gamma$ with the property that $q$ restricted
to that subgroup is an isomorphism with $\cD$.  In either case, we define 
\[
\cD^0=\{L\in\cO_2:[0,L]\in\Gamma\},
\]
and note that $\Gamma$ is symmorphic exactly when $\cD^0=\cD$.

We
draw particular attention to the wallpaper group
denoted $pg$. This group is generated by $\{[u,{\rm id}],[v,{\rm id}],[(1/2)v,S]\}$, where $S\in\cO_2$ is reflection in the line determined by multiples of $v$.   The group $pg$ has a rectangular lattice \cite{mor}, so we can assume that $u$ and $v$ are along the horizontal and vertical axes respectively,  so that  
$[(1/2)v,S]$ moves up by one half of a vertical 
translation unit and reflects about the vertical 
axis. This is 
called a {\it glide-reflection}.
We have
\[
pg=\{[ju+kv,{\rm id}]:(j,k)\in\Z^2\}\bigcup
\left\{\left[ju+\left(k+\frac{1}{2}\right)v,S\right]:
(j,k)\in\Z^2\right\}.
\]
Note that $pg$ is nonsymmorphic as $[0,S]\notin pg$, and that here we have $\cD=\{\rm id, S\}$, while 
$\cD^0=\{\rm id\}$.

For a general wallpaper group $\Gamma$,
we will be interested in
\[
T_{\Gamma}=\{x\in\R^2:[x,L]\in\Gamma,
\,\,\text{for some}\,\,L\in\cD\},
\] 
and also, for each $L\in\cD,$ 
\[
T^L_{\Gamma}=\{x\in\R^2:[x,L]\in\Gamma\}.
\]

If $\Gamma$ is symmorphic,
then $T_{\Gamma}=\{ju+kv:(j,k)\in\Z^2\}$, a lattice in
$\R^2$. However,
there are four nonsymmorphic wallpaper groups. They
are $pg$, $pmg2$, $pgg2$, and $p4mg$ in a standard
notation scheme (see e.g. \cite{mor} or \cite{sch}). For each of these, extra elements are
added to the lattice to form $T_{\Gamma}$, as described in the following lemma. 

\begin{lemm}\label{T_gamma_lemma}
If $\Gamma$ is a nonsymmorphic wallpaper group and
$u,v\in\R^2$ are such that 
$\cN=\{[ju+kv,{\rm id}]:(j,k)\in\Z^2\}$, then
\begin{enumerate}
\item $u \perp v$, so we may assume $u=(1,0)$ and $v=(0,1)$.
\item There exists a $z\in\R^2$ such that 
$T^S_{\Gamma}=
\left\{ju+kv+\frac12z:(j,k)\in\Z^2\right\},$ where $S$ is reflection in the vertical axis.
If $\Gamma$ is either $pg$ or $pmg2$,
we can take $z=v$; if $\Gamma$ is either $pgg2$ or $p4mg$, we can take $z=u+v.$
\item For $L\in\cD$, $L\neq S$, $T^L_{\Gamma}=\{ju+kv:(j,k)\in\Z^2\}$
\end{enumerate}
\end{lemm}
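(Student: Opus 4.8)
The plan is to peel off part~(1), which is classical crystallography, from parts~(2) and~(3), which reduce to a short finite check once one records one elementary fact about the sets $T^L_\Gamma$. Fix $L\in\cD$. Since $\cN$ is normal in $\Gamma$, conjugating $\cN$ by any $[x,L]\in\Gamma$ and using $[x,L][y,M]=[M^{-1}x+y,LM]$ forces $L(N_\Gamma)=N_\Gamma$; then for $[a,L],[b,L]\in\Gamma$ one has $[a,L][b,L]^{-1}=[L(a-b),{\rm id}]\in\cN$, so $a\equiv b\pmod{N_\Gamma}$, while $[w,{\rm id}][a,L]=[L^{-1}w+a,L]\in\Gamma$ for every $w\in N_\Gamma$. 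Hence $T^L_\Gamma$ is exactly one coset of $N_\Gamma$ in $\R^2$, and $T^L_\Gamma=N_\Gamma$ precisely when $[0,L]\in\Gamma$, i.e.\ when $L\in\cD^0$. Thus part~(3) reduces to placing $\Gamma$ in $\R^2$ so that the elements $L\neq S$ in question lie in $\cD^0$ --- concretely, so that ${\rm id}$ and every rotation in $\cD$ lie in $\cD^0$ --- and part~(2) reduces to exhibiting one $[z',S]\in\Gamma$ and identifying $z'$ mod $N_\Gamma$ with $\frac12 z$ for the claimed $z$; here $[z',S]^2=[Sz'+z',{\rm id}]\in\cN$ already forces the component of $2z'$ along the axis of $S$ to lie in $N_\Gamma$, which is what makes the coset ``half-integral.''

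For part~(1): a nonsymmorphic $\Gamma$ contains a genuine glide $g=[x,S]$, where $S$ is reflection in a line $\ell$; by the observation above $S$ preserves $N_\Gamma$, and $g^2=[Sx+x,{\rm id}]\in\cN$ exhibits a nonzero lattice vector along $\ell$. A planar lattice with a reflection symmetry is rectangular (primitive or centred), square, or hexagonal. The hexagonal and square cases force point-group elements that, among the four groups at hand, occur only for $p4mg$ (whose lattice is a square, a rectangle with $|u|=|v|$); and the centred rectangular case is excluded because composing $g$ with a suitable centring translation and re-centring the origin converts $g$ into a genuine reflection, i.e.\ puts a mirror for $S$ into $\Gamma$ --- which none of $pg$, $pmg2$, $pgg2$, $p4mg$ admits in the glide direction. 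In the write-up I would simply cite the classification of the seventeen wallpaper groups (\cite{mor}, \cite{sch}) for the statement that these four have primitive rectangular --- respectively square --- lattices, and then normalize: $N_\Gamma=\Z u+\Z v$ with $u=(1,0)$, $v=(0,1)$ and $S\colon(x_1,x_2)\mapsto(-x_1,x_2)$ in $\cD$.

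Parts~(2) and~(3) I would then verify group by group from standard generators. For $pg$ the description $pg=\cN\cup\{[ju+(k+\frac12)v,S]:(j,k)\in\Z^2\}$ recalled earlier gives $T^S_\Gamma=\frac12 v+N_\Gamma$ at once, and $\cD=\{{\rm id},S\}$ leaves nothing more. For $pmg2$, place the origin at a $2$-fold centre lying on a vertical glide axis (such a placement exists; placing it on the mirror instead would break part~(3)); then $\rho(\pi)\in\cD^0$, so $T^{{\rm id}}_\Gamma=T^{\rho(\pi)}_\Gamma=N_\Gamma$, and the glide generator has the form $[\frac12 v,S]$ mod $N_\Gamma$, whence $T^S_\Gamma=\frac12 v+N_\Gamma$. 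For $pgg2$, placing the origin at a $2$-fold centre now pushes the vertical glide axis onto a quarter-period line, so its generator becomes $[\frac12(u+v),S]$ mod $N_\Gamma$ and $T^S_\Gamma=\frac12(u+v)+N_\Gamma$, while $[0,\rho(\pi)]\in\Gamma$ keeps $T^{{\rm id}}_\Gamma=T^{\rho(\pi)}_\Gamma=N_\Gamma$. For $p4mg$, place the origin at a centre of $4$-fold rotation: every rotation in $\cD$ then lies in $\cD^0$, so $T^L_\Gamma=N_\Gamma$ for all of them, and the vertical reflection $S$ occurs only in a glide with generator $[\frac12(u+v),S]$ mod $N_\Gamma$, so $T^S_\Gamma=\frac12(u+v)+N_\Gamma$.

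The step I expect to be the real obstacle, beyond keeping the affine arithmetic straight, is the interaction of parts~(2) and~(3) for $pgg2$ and $p4mg$: one must check that the origin placement forced by part~(3) --- at a rotation centre --- is precisely the one that offsets the glide axis by a quarter period, which is what turns the $v$ of $pg,pmg2$ into the $u+v$ of $pgg2,p4mg$. This is where a concrete generating set (equivalently, the International Tables descriptions of $p2gg$ and $p4mg$) has to be invoked and where sign slips are most likely; establishing part~(1) from scratch, rather than citing the classification, is the other place where nontrivial work is hidden.
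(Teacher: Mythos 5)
Your reduction is the right idea and goes well beyond what the paper records (its entire proof of this lemma is a citation to \cite{mor}): the observation that $T^L_\Gamma$ is a single coset of $N_\Gamma$, equal to $N_\Gamma$ exactly when $L\in\cD^0$, is correct and is exactly the right tool. The gap is in your case-by-case verification of part~(3). For $pmg2$, $pgg2$ and $p4mg$ you check only $L={\rm id}$ and the rotations, and you never address the reflections in $\cD$ other than $S$ itself --- the horizontal reflection $\rho(\pi)S$ for $pmg2$ and $pgg2$, and the three reflections $\rho(\pi/2)^jS$, $j=1,2,3$, for $p4mg$. These are elements of $\cD$ with $L\neq S$, so part~(3) as written makes a claim about them, and that claim is false. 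Your own coset lemma makes this transparent: if $T^{\rho(\pi)}_\Gamma=N_\Gamma$ and $T^{\rho(\pi)S}_\Gamma=N_\Gamma$ both held, then $[0,\rho(\pi)]$ and $[0,\rho(\pi)S]$ would both lie in $\Gamma$, hence so would their product $[0,\rho(\pi)][0,\rho(\pi)S]=[0,S]$, contradicting part~(2). Equivalently, $\cD^0$ is an index-two subgroup of $\cD$ for each nonsymmorphic $\Gamma$, so $\cD\setminus\cD^0$ is a singleton only for $pg$; for the other three groups there are additional $L\neq S$ with $T^L_\Gamma=\tfrac12 z+N_\Gamma\neq N_\Gamma$. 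Your parenthetical for $pmg2$ (``placing the origin on the mirror instead would break part~(3)'') is the symptom: no choice of origin puts all of ${\rm id}$, $\rho(\pi)$ and $\rho(\pi)S$ into $\cD^0$ simultaneously.

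So the omitted cases cannot be filled in as the statement stands. What is true --- and what the rest of the paper actually uses, since Definition 4.11 and Corollary 4.12 split on $L\in\cD^0$ versus $L\notin\cD^0$ rather than on $L=S$ versus $L\neq S$ --- is: $T^L_\Gamma=N_\Gamma$ for $L\in\cD^0$, and $T^L_\Gamma=\tfrac12 z+N_\Gamma$ for every $L\in\cD\setminus\cD^0$, with the same $z$ as in part~(2). Your framework proves exactly this once the finite check is completed over all of $\cD$, so the repair is to restate part~(3) in terms of $\cD^0$ and run your verification for the omitted reflections as well. Parts~(1) and (2) of your argument are fine modulo the appeal to the classification of lattices admitting reflections, which is also all the paper relies on.
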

\begin{proof}
See, e.g.\cite{mor}.
\end{proof}

Let $A\in \cGL_2(\R)$. We say $A$ is 
{\it compatible with $\Gamma$} if all eigenvalues of $A$ have
absolute value larger than 1 (that is, $A$ is a dilation matrix),
$[0,A]\Gamma[0,A]^{-1}\subseteq\Gamma$
(in which case $[0,A]\Gamma[0,A]^{-1}$ is a subgroup of $\Gamma$),
 and $\Gamma/[0,A]\Gamma[0,A]^{-1}$
is finite. Let $[x,L]\in\Gamma$.
Then
\[
[0,A][x,L][0,A]^{-1}=[x,AL][0,A^{-1}]=
[Ax,ALA^{-1}].
\]
Thus, $A$ is compatible with $\Gamma$ means 
$A\cD A^{-1}\subseteq\cD$ so, since $\cD$ is finite, 
$A\cD A^{-1}=\cD$. It also means that $AN_\Gamma\subseteq N_\Gamma\subseteq A^{-1}N_\Gamma$,
$\cap_{k=1}^\infty A^kN_\Gamma=\{0\}$, and 
$\cup_{k=1}^\infty A^{-k}N_\Gamma$ is dense in $\R^2$.
\begin{rem}
If $A\in \cGL_2(\R)$ is compatible with
$\Gamma$, then applying the natural representation, given by \eqref{nat_rep}, to
$[0,A]$ gives $R[0,A]g(y)=|\det(A)|^{-1/2}
g(A^{-1}y)$, for $y\in\R^2, g\in L^2(\R^2)$.
That is, $R[0,A]=D_{A^{-1}}$ in the notation
of \cite{lpt}. Recall, for $x\in N_\Gamma$, $R[x,{\rm id}]g(y)=
g(y-x)$, for $y\in\R^2, g\in L^2(\R^2)$. Thus,
$R(\Gamma)\cup\{R[0,A^k]:k\in\Z\}$ contains all the shifts
by vectors in the lattice 
$N_\Gamma$ and dilations by powers of $A$.
\end{rem}
\begin{definition}
If $\Gamma$ is a wallpaper group and $A\in\cGL_2(\R)$
is compatible with $\Gamma$, let
 $\cG(A,\Gamma)$ denote the smallest group of unitary operators on
$L^2(\R^2)$ containing $R[0,A]$ and the set $R(\Gamma)$.
\end{definition}
If $\cH$ is a Hilbert space, $\cB(\cH)$ denotes
the Banach algebra of bounded linear operators on $\cH$.
For $\cS\subseteq\cB(\cH)$, the \emph{commutant} of $\cS$ is
$\cS'=\{B\in\cB(\cH):BS=SB, \,\forall S\in\cS\}$. For later use, we record an observation on the commutant of $\cG(A,\Gamma)$.

\begin{prop}\label{abelian_comm}
The commutant of $\cG(A,\Gamma)$ in $\cB\big(L^2(\R^2)\big)$ is
abelian.
\end{prop}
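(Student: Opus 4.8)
The plan is to exploit the fact that the dilation $R[0,A]$ lets one recover, from the translations already present in $R(\Gamma)$, a dense set of translations of $\R^2$, after which an appeal to a classical maximality property finishes the argument. Write $M=\cG(A,\Gamma)''$ for the von Neumann algebra generated by $\cG(A,\Gamma)$; since $\cG(A,\Gamma)$ is a self-adjoint set, $\cG(A,\Gamma)'=M'$. Let $\cT$ denote the von Neumann algebra generated by the translation unitaries $\{T_x:x\in\R^2\}$, where $T_x=R[x,{\rm id}]$. The von Neumann algebra generated by all translations of $L^2(\R^2)$ is maximal abelian (it is the group von Neumann algebra of $\R^2$, carried by the Plancherel transform onto $L^\infty(\R^2)$ acting as multiplication operators), so $\cT'=\cT$. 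Hence, once we show $\cT\subseteq M$, taking commutants gives $\cG(A,\Gamma)'=M'\subseteq\cT'=\cT$, and $\cT$ is abelian, which is the assertion.

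It remains to prove $\cT\subseteq M$. First, $\cN\subseteq\Gamma$ gives $R(\cN)\subseteq R(\Gamma)\subseteq\cG(A,\Gamma)$, so $T_x\in\cG(A,\Gamma)$ for every $x\in N_\Gamma$. Next, a direct computation from \eqref{nat_rep} (equivalently, using $R[0,A]=D_{A^{-1}}$ as in the remark above) shows $R[0,A]\,T_x\,R[0,A]^{-1}=T_{Ax}$, hence $R[0,A]^{-1}T_xR[0,A]=T_{A^{-1}x}$. Since compatibility of $A$ with $\Gamma$ gives $N_\Gamma\subseteq A^{-1}N_\Gamma$, an induction on $k$ yields $T_x\in\cG(A,\Gamma)$ for all $x\in\bigcup_{k\ge1}A^{-k}N_\Gamma$. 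This union is dense in $\R^2$, and $x\mapsto T_x$ is continuous for the strong operator topology, so $M$, being strongly closed, contains $T_x$ for every $x\in\R^2$; therefore $\cT\subseteq M$.

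I do not expect a genuine obstacle here: the only substantive ingredient is the maximal abelianness of the translation von Neumann algebra, which is classical and may simply be cited, while the conjugation relation for $R[0,A]$ and the density of $\bigcup_k A^{-k}N_\Gamma$ are routine and were essentially recorded already. Conceptually, the proposition says that the $3\Gamma$-wavelet representation already ``contains'' all of $L^\infty(\R^2)$ in its generated von Neumann algebra once the dilation is brought in, which is precisely the statement that this representation is multiplicity-free.
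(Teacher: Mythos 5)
Your proof is correct, but it takes a different route from the paper's. The paper disposes of the proposition by citation: it observes that $\cG(A,\Gamma)$ contains the classical system $\{D_A\}\cup\{T_v:v\in N_\Gamma\}$ and then invokes equation (2) of \cite{lpt}, where the commutant of exactly that system is computed (after Fourier transform) and seen to be abelian. You instead give a self-contained argument: conjugating the lattice translations by powers of the dilation produces $T_x$ for all $x$ in the dense set $\bigcup_{k\ge1}A^{-k}N_\Gamma$, strong-operator continuity of $x\mapsto T_x$ then puts every translation of $\R^2$ into $\cG(A,\Gamma)''$, and maximal abelianness of the full translation algebra $\cT\cong L^\infty(\R^2)$ forces $\cG(A,\Gamma)'\subseteq\cT'=\cT$. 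All the ingredients you use (the relation $R[0,A]T_xR[0,A]^{-1}=T_{Ax}$, the density of $\bigcup_k A^{-k}N_\Gamma$, the identification $\cS'=(\cS'')'$) are either recorded in the paper or standard, so there is no gap. What your version buys is independence from \cite{lpt} together with the slightly sharper conclusion that the commutant actually sits inside the multiplication algebra $L^\infty(\widehat{\R^2})$; what the paper's version buys is brevity and, via the cited formula, an explicit description of which multiplication operators occur. Note also that your argument uses only $D_A$ and the pure translations in $\Gamma$, just as the paper's does, so neither proof needs the point-group part of $\Gamma$ at all.
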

\begin{proof}
In the notation of \cite{lpt}, $\cG(A,\Gamma)$ contains the
$\{D_A, T_v\, \big|\, v\in N_\Gamma\}$, so $\cG(A,\Gamma)'$
is contained in $\{D_A, T_v\, \big|\, v\in N_\Gamma\}'$. This latter 
set is abelian by
equation (2) in \cite{lpt}, after implementing the unitary equivalence given by the Fourier transform.
\end{proof}

\section{The $3\Gamma$-wavelet representation}
With the goal of understanding $\cG(A,\Gamma)$ better, we take
a closer look at the subgroup of ${\rm Aff}(\R^2)$ generated 
by $[0,A]$ and $\Gamma$. For each $\ell\in\Z$, $[0,A]^{\ell}\Gamma[0,A]^{-\ell}$ is a
subgroup of ${\rm Aff}(\R^2)$ and
\[
\cdots\subseteq[0,A]^{2}\Gamma[0,A]^{-2}\subseteq
[0,A]^{1}\Gamma[0,A]^{-1}\subseteq
\Gamma\subseteq
[0,A]^{-1}\Gamma[0,A]^{1}\subseteq
[0,A]^{-2}\Gamma[0,A]^{2}\subseteq\cdots.
\]
Let $\Gamma_A=
\cup_{\ell\in\Z}[0,A]^{\ell}\Gamma[0,A]^{-\ell}=
\cup_{m=1}^\infty[0,A]^{-m}\Gamma[0,A]^{m}$. Then
$\Gamma_A$ is a countable subgroup of ${\rm Aff}(\R^n)$
such that, if $[x,L]\in\Gamma_A$, then $L\in\cD$. 
 Let $\cN_A={\rm Trans}(\R^2)\cap\Gamma_A$,
the pure translations in $\Gamma_A$.  There
are two subsets of $\R^2$ that are of particular 
interest to us.
Let $T_{\Gamma_A}=
\{x\in\R^2:[x,L]\in\Gamma_A,\,\,\text{for some}\,\,
L\in\cD\}$.  Finally, let 
$N_{\Gamma_A}=\left\{x\in\R^2:[x,0]\in\cN_A\right\}=
\cup_{k=1}^\infty A^{-k}N_\Gamma$. 

\begin{prop} Let $\Gamma$ be a wallpaper group and let 
$A\in\cGL_2(\R)$ be compatible with $\Gamma$. Then
\begin{enumerate}
\item $\cN_A$ is a normal subgroup of $\Gamma_A$.
\item $q(\Gamma_A)=\cD$.
\item $\Gamma_A/\cN_A$ is isomorphic to $\cD$.
\item Both $N_{\Gamma_A}$ and $T_{\Gamma_A}$ are dense in $\R^2$.
\end{enumerate}
\end{prop}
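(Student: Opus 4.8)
The plan is to dispatch the four parts in order, since each follows quickly from the structure of the homomorphism $q$ together with the compatibility hypothesis on $A$.

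For (1), I would note that ${\rm Trans}(\R^2)=\ker(q)$ is a normal subgroup of ${\rm Aff}(\R^2)$, so its intersection with any subgroup of ${\rm Aff}(\R^2)$ is normal in that subgroup; applied to $\Gamma_A$, this says that $\cN_A={\rm Trans}(\R^2)\cap\Gamma_A$ is normal in $\Gamma_A$. For (2), I would compute $q$ on the generating pieces of $\Gamma_A$. Iterating the conjugation identity $[0,A][x,L][0,A]^{-1}=[Ax,ALA^{-1}]$ shows that a typical element $[0,A]^{\ell}[x,L][0,A]^{-\ell}$ of $[0,A]^{\ell}\Gamma[0,A]^{-\ell}$ equals $[A^{\ell}x,A^{\ell}LA^{-\ell}]$, so its $q$-image is $A^{\ell}LA^{-\ell}$. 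As $[x,L]$ runs over $\Gamma$, $L$ runs over $\cD=q(\Gamma)$, so the $q$-image of the $\ell$-th piece is $A^{\ell}\cD A^{-\ell}$; since $A\cD A^{-1}=\cD$ (a consequence of compatibility recorded above), this equals $\cD$ for every $\ell\in\Z$. Taking the union over $\ell$ gives $q(\Gamma_A)=\cD$.

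Part (3) is then immediate from the first isomorphism theorem applied to $q$ restricted to $\Gamma_A$: the image is $\cD$ by (2), and the kernel is $\Gamma_A\cap\ker(q)=\Gamma_A\cap{\rm Trans}(\R^2)=\cN_A$, so $\Gamma_A/\cN_A\cong\cD$. For (4), I would use that $N_{\Gamma_A}=\cup_{k=1}^{\infty}A^{-k}N_\Gamma$ by definition, and that this increasing union of full-rank lattices is dense in $\R^2$; this density was already recorded among the consequences of $A$ being a dilation matrix, and if one wanted it self-contained the quick argument is that $A^{-k}\to0$ forces the diameter of a fundamental domain of $A^{-k}N_\Gamma$ to tend to $0$, so for any $x\in\R^2$ and $\varepsilon>0$ the lattice $A^{-k}N_\Gamma$ meets the $\varepsilon$-ball about $x$ once $k$ is large. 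Finally, because $[x,{\rm id}]\in\cN_A\subseteq\Gamma_A$ with ${\rm id}\in\cD$, we have $N_{\Gamma_A}\subseteq T_{\Gamma_A}$, so density of $N_{\Gamma_A}$ gives density of $T_{\Gamma_A}$.

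There is no serious obstacle: the argument is bookkeeping on top of facts already in hand. The only places the hypotheses enter essentially are the identity $A^{\ell}\cD A^{-\ell}=\cD$ used in (2), which is exactly where compatibility of $A$ with $\Gamma$ is needed, and the contraction $A^{-k}\to0$ in (4), which is the dilation property of $A$.
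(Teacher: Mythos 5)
Your proof is correct and follows essentially the same route as the paper's: normality of $\cN_A$ from $\cN_A=\Gamma_A\cap\ker(q)$, the first isomorphism theorem for (3), and density of $N_{\Gamma_A}=\cup_{k\ge1}A^{-k}N_\Gamma$ (via $A^{-k}\to0$) together with $N_{\Gamma_A}\subseteq T_{\Gamma_A}$ for (4). The paper compresses (1) and (2) to ``immediate''; your explicit computation of $q$ on each conjugate $[0,A]^{\ell}\Gamma[0,A]^{-\ell}$ using $A\cD A^{-1}=\cD$ just fills in that detail.
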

\begin{proof} 
(1) and (2) are immediate since ${\rm Trans}(\R^2)$ is a normal subgroup of $\cGL_2(\R)$. Since $\cN_A=\{[x,L]\in\Gamma_A:
q[x,L]={\rm id}\}$ and $q$ is a homorphism onto $\cD$ when restricted to $\Gamma_A$, (3) follows. As observed above,
$N_{\Gamma_A}=\cup_{k=1}^\infty A^{-k}N_\Gamma$ is dense in 
$\R^2$. Thus $T_{\Gamma_A}$ is dense as well. 
\end{proof}

In what follows, we will restrict our attention to compatible $A$
in the
center of $\cGL_2(\R)$. This significantly simplifies
calculations. 
For any $d\in\N$, $d\geq 2$, $A=d\cdot{\rm id}$ is compatible
with each of the symmorphic wallpaper groups, because
$ALA^{-1}=L$,  and $A(ju+kv)=(dj)u+(dk)v
\in T^L_{\Gamma}$ for each $L\in\cD$
and all $ju+kv\in T_{\Gamma}$. However, as we will see in the proof of Proposition \ref{compat}, $d$ must be odd
for $A$ to be compatible with a nonsymmorphic group. 
\begin{prop}
\label{compat}
Let $A=d\cdot{\rm id}$, with $d\in\N$ odd.  
Then $A$ is compatible with all 17 of the wallpaper groups.
\end{prop}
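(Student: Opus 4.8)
The plan is to check, for each of the $17$ groups, the three conditions in the definition of ``$A$ compatible with $\Gamma$'': that $A$ is a dilation matrix, that $[0,A]\Gamma[0,A]^{-1}\subseteq\Gamma$, and that $\Gamma/[0,A]\Gamma[0,A]^{-1}$ is finite. The first is trivial, since the only eigenvalue of $A=d\cdot{\rm id}$ is $d$, and $d>1$ (we take $d\ge 3$, as $d$ must exceed $1$ for $A$ to be a dilation and $d$ is odd). Since $A$ lies in the center of $\cGL_2(\R)$, the identity $[0,A][x,L][0,A]^{-1}=[Ax,ALA^{-1}]=[dx,L]$ from Section 2 shows the second condition is equivalent to the single requirement
\[
d\,T^L_{\Gamma}\subseteq T^L_{\Gamma}\quad\text{for every }L\in\cD .
\]
Granting this, the third condition is automatic: $[0,A]\Gamma[0,A]^{-1}$ then contains $[0,A]\cN[0,A]^{-1}$, the translations by $dN_\Gamma$, which already form a subgroup of $\Gamma$ of finite index $|\cD|\cdot d^{2}=[\Gamma:\cN]\cdot[N_\Gamma:dN_\Gamma]$; hence $[\Gamma:[0,A]\Gamma[0,A]^{-1}]$ is finite as well. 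So everything reduces to the displayed inclusion.

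If $\Gamma$ is one of the $13$ symmorphic groups, then $[0,L]\in\Gamma$ for all $L\in\cD$, so each $T^L_{\Gamma}$ equals the lattice $N_\Gamma=\{ju+kv:(j,k)\in\Z^2\}$, and $d\,N_\Gamma\subseteq N_\Gamma$ holds because $d\in\Z$. (This part works for every integer $d\ge 2$.)

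For the four nonsymmorphic groups $pg$, $pmg2$, $pgg2$, $p4mg$, I would appeal to Lemma \ref{T_gamma_lemma}: we may take $u=(1,0)$, $v=(0,1)$, so $N_\Gamma=\Z^2$; for every $L\in\cD$ with $L\ne S$ one has $T^L_\Gamma=\Z^2$, handled exactly as in the symmorphic case; and $T^S_\Gamma=\{ju+kv+\tfrac12 z:(j,k)\in\Z^2\}$, with $z=v=(0,1)$ when $\Gamma$ is $pg$ or $pmg2$ and $z=u+v=(1,1)$ when $\Gamma$ is $pgg2$ or $p4mg$. For $x=ju+kv+\tfrac12 z$ one writes
\[
dx=\bigl(d(ju+kv)+\tfrac{d-1}{2}\,z\bigr)+\tfrac12 z ,
\]
so $dx\in T^S_\Gamma$ if and only if the vector in parentheses lies in $\Z^2$; and it does, since $d$ being odd forces $\tfrac{d-1}{2}\in\Z$ while $z\in\{(0,1),(1,1)\}\subseteq\Z^2$. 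Hence $d\,T^S_\Gamma\subseteq T^S_\Gamma$, completing the proof. Conversely, if $d$ were even then $\tfrac{d-1}{2}\,z$ would have a half-integer coordinate, so $[\,d\cdot\tfrac12 z,\,S\,]\notin\Gamma$ although $[\,\tfrac12 z,\,S\,]\in\Gamma$; this is the promised reason that compatibility with a nonsymmorphic group forces $d$ odd.

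I do not expect a genuine obstacle here: the argument is a short finite case analysis resting on the structural facts of Section 2 and on Lemma \ref{T_gamma_lemma}. The one point requiring a moment's care is the parity bookkeeping in the nonsymmorphic case, where the ``$\tfrac12 z$'' defect in $T^S_\Gamma$ must be reabsorbed into the lattice after multiplication by $d$; oddness of $d$ is exactly what permits this, and it is also precisely the feature that fails when $d$ is even.
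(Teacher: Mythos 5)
Your proof is correct and follows essentially the same route as the paper: reduce compatibility to the inclusion $d\,T^L_\Gamma\subseteq T^L_\Gamma$, dispose of the symmorphic case and of $L\neq S$ immediately, and handle $T^S_\Gamma$ by writing $\tfrac d2 z=\tfrac{d-1}{2}z+\tfrac12 z$ with $\tfrac{d-1}{2}\in\Z$, which is exactly the paper's parity argument (including the remark about failure for even $d$). Your explicit check that the quotient $\Gamma/[0,A]\Gamma[0,A]^{-1}$ is finite is a small addition the paper leaves implicit, but it does not change the argument.
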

\begin{proof}
Since for any $[x,L]\in \Gamma$ where $\Gamma$ is any wallpaper group, we have $ALA^{-1}=L$,  we need only verify $Ax\in T^L_{\Gamma}$
for all $[x,L]\in \Gamma$.  
This is clearly true if $x$ is of the form $ju+kv$, with
$(j,k)\in\Z^2$.
The other possibility for $[x,L]$ is $[ju+kv+\frac12z,S]$ with $(j,k)\in\Z^2$, where $z=v$ or $z=u+v$, and $S$ is reflection in the vertical axis.  In either case, since $d$ is odd, 
$Ax=dju+dkv+\frac d2z$ is of the form $j'u+k'v+\frac12 z$ with $(j',k')\in\Z^2$, and thus is in $T^S_{\Gamma}$.  Note that for $d$ even, $Ax$ is of the form $j'u+k'v$, with $(j',k')\in\Z^2$, which is not in $T^S_{\Gamma}$.   
\end{proof}
To further simplify, we will
work with the smallest available $d$;  
from now on $A=3\cdot{\rm id}$. There would be no meaningful
change in the following if 3 is replaced by any odd integer greater than 1. To acknowledge that $A=3\cdot{\rm id}$ from now
on $\Gamma_A, \cN_A, N_{\Gamma_A}$ and $T_{\Gamma_A}$ are written as $\Gamma_3, \cN_3, N_{\Gamma_3}$ and $T_{\Gamma_3}$, respectively.

We can use conjugation by $[0,3\cdot{\rm id}]$ to define an action 
$\vartheta$ of
$\Z$ on $\Gamma_3$. For $\ell\in\Z$,
$\vartheta_\ell$ is defined on $\Gamma_3$ by 
 $\vartheta_\ell[x,L]=
 [0,3\cdot{\rm id}]^{-\ell}[x,L][0,3\cdot{\rm id}]^\ell=
[3^{-\ell}x,L]$, for all $[x,L]\in\Gamma_3$. We then form
the semi-direct product group
\[
\Gamma_3\rtimes_{\vartheta}\Z=
\{\big([x,L],\ell\big):[x,L]\in\Gamma_3, \ell\in\Z\},
\]
equipped with group product
\begin{equation}\label{group_product}
\big([x,L],\ell\big)\big([y,M],m\big)=
\big([x,L](\vartheta_\ell[y,M]),\ell+m\big)=
\big([M^{-1}x+3^{-\ell}y,LM],\ell+m\big).
\end{equation}
Note that $\big([x,L],\ell\big)^{-1}=
\big([-3^\ell Lx,L^{-1}],-\ell\big)$, for
$\big([x,L],\ell\big)\in 
\Gamma_3\rtimes_{\vartheta}\Z$.
We will identify $\Gamma_3$ with 
$\{\big([x,L],0\big):[x,L]\in\Gamma_3\}$, a
normal subgroup of 
$\Gamma_3\rtimes_{\vartheta}\Z$. Likewise, we
identify $\cN_3$ with its copy inside
$\Gamma_3\rtimes_{\vartheta}\Z$.
\begin{prop}\label{N_A_normal}
$\cN_3$ is a normal subgroup of 
$\Gamma_3\rtimes_{\vartheta}\Z$ and
$\Gamma_3\rtimes_{\vartheta}\Z/\cN_3$ is
isomorphic to $\cD\times\Z$.
\end{prop}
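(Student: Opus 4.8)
The plan is to construct a single surjective homomorphism $\Phi\colon\Gamma_3\rtimes_{\vartheta}\Z\to\cD\times\Z$ whose kernel is exactly $\cN_3$; by the first isomorphism theorem this yields both assertions at once. The natural candidate is the map that records the linear part together with the $\Z$-coordinate, $\Phi\big([x,L],\ell\big)=(L,\ell)$. On the normal subgroup $\Gamma_3$ this is merely the restriction of $q$, which by the preceding proposition (taking $A=3\cdot{\rm id}$) maps $\Gamma_3$ onto $\cD$ with kernel $\cN_3$.

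I would first dispose of normality, since it is quick. We already know $\cN_3$ is normal in $\Gamma_3$, and each $\vartheta_\ell$ maps $\cN_3$ onto itself, because $\vartheta_\ell[x,{\rm id}]=[3^{-\ell}x,{\rm id}]$ and $N_{\Gamma_3}=\bigcup_{k\geq1}3^{-k}N_\Gamma$ is stable under multiplication by $3^{\pm1}$. A subgroup of $\Gamma_3\rtimes_{\vartheta}\Z$ that is normal in $\Gamma_3$ and invariant under the $\Z$-action is automatically normal in the whole group, since the latter is generated by $\Gamma_3$ and the copy of $\Z$. Alternatively, a direct computation using \eqref{group_product} gives $\big([y,M],m\big)\big([x,{\rm id}],0\big)\big([y,M],m\big)^{-1}=\big([3^{-m}Mx,{\rm id}],0\big)$, and $3^{-m}Mx\in N_{\Gamma_3}$ because $MN_\Gamma=N_\Gamma$ for every $M\in\cD$ and $N_{\Gamma_3}$ absorbs the factor $3^{-m}$.

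Next I would verify that $\Phi$ is a homomorphism with kernel $\cN_3$. Using \eqref{group_product},
\[
\Phi\big(([x,L],\ell)([y,M],m)\big)=\Phi\big([M^{-1}x+3^{-\ell}y,\,LM],\,\ell+m\big)=(LM,\ell+m)=(L,\ell)(M,m),
\]
so $\Phi$ is indeed a homomorphism; the point of this choice is that the twisting terms $M^{-1}x$ and $3^{-\ell}y$ never reach the image. Surjectivity is immediate from $q(\Gamma_3)=\cD$ together with the arbitrariness of the $\Z$-coordinate. Finally, $\Phi\big([x,L],\ell\big)=({\rm id},0)$ forces $L={\rm id}$ and $\ell=0$, i.e.\ $([x,L],\ell)$ belongs to the copy of $\cN_3$, and conversely every element of that copy is sent to $({\rm id},0)$. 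Hence $\ker\Phi=\cN_3$, and $\big(\Gamma_3\rtimes_{\vartheta}\Z\big)/\cN_3\cong\cD\times\Z$.

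I do not anticipate a real obstacle: once the right map is written down, the argument is just bookkeeping with \eqref{group_product}. The single point that requires a moment's care is the identity $MN_\Gamma=N_\Gamma$ for $M\in\cD$ (needed only for the direct verification of normality); it follows by conjugating a pure translation $[t,{\rm id}]\in\cN$ by any $[w,M]\in\Gamma$, which produces $[Mt,{\rm id}]\in\Gamma$, so $MN_\Gamma\subseteq N_\Gamma$, whence equality because $\cD$ is finite.
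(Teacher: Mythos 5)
Your proposal is correct and follows essentially the same route as the paper: the paper also establishes normality by noting that $\cN_3$ is normal in $\Gamma_3$ and invariant under each $\vartheta_\ell$, and then exhibits the same map $Q\big([x,L],\ell\big)=(L,\ell)$ as a surjective homomorphism onto $\cD\times\Z$ with kernel $\cN_3$. Your version merely fills in more of the routine verifications (the explicit conjugation computation and the identity $MN_\Gamma=N_\Gamma$), all of which are accurate.
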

\begin{proof}
Considering $\cN_3$ as a normal subgroup of 
$\Gamma_3$ for the moment we have, 
for any $\ell\in\Z$, 
$\vartheta_\ell[x,{\rm id}]=
[3^{-\ell}x,{\rm id}]\in\cN_3$, for any 
$[x,{\rm id}]\in\cN_2$. This means $\cN_3$ is
also normal in the semi-direct product
$\Gamma_3\rtimes_{\vartheta}\Z$. The
map $Q$ defined by
$Q\big([x,L],\ell\big)= (L,\ell)$ is
a homomorophism of 
$\Gamma_3\rtimes_{\vartheta}\Z$ onto 
$\cD\times\Z$ and $\ker(Q)=\cN_3$. This
shows that
$\Gamma_3\rtimes_{\vartheta}\Z/\cN_3$ is
isomorphic to $\cD\times\Z$.
\end{proof}
We will need to factor elements
of $\Gamma_3\rtimes_{\vartheta}\Z$ in a 
particular manner. Although this is just an observation, 
we state it as a lemma for future reference.
\begin{lemm}\label{factoring}
For $\big([x,L],\ell\big)\in
\Gamma_3\rtimes_{\vartheta}\Z$, we have
\[
\big([x,L],\ell\big)=\big([0,{\rm id}],\ell\big)
\big([3^\ell x,L],0\big).
\]
\end{lemm}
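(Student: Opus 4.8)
The plan is to verify the identity directly from the defining group product \eqref{group_product} on $\Gamma_3\rtimes_{\vartheta}\Z$. First I would take the two proposed right-hand factors, namely $\big([0,{\rm id}],\ell\big)$ and $\big([3^\ell x,L],0\big)$, and substitute them into \eqref{group_product}: the left factor contributes ``$x$''${}=0$, ``$L$''${}={\rm id}$, and ``$\ell$''${}=\ell$, while the right factor contributes ``$y$''${}=3^\ell x$, ``$M$''${}=L$, and ``$m$''${}=0$.

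The computation then proceeds component by component. The $\cGL_2(\R)$-component of the product is $LM={\rm id}\cdot L=L$; the $\Z$-component is $\ell+m=\ell+0=\ell$; and the translation component $M^{-1}x+3^{-\ell}y$, evaluated at these substitutions, becomes $L^{-1}\cdot 0+3^{-\ell}\cdot 3^\ell x=0+x=x$. Hence the product equals $\big([x,L],\ell\big)$, as claimed.

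I expect no genuine obstacle here, since the statement is purely formal; the only point requiring a moment's care is tracking which of the powers of $3$ in \eqref{group_product} acts on which variable. The factor $3^{-\ell}$ in that formula is applied to the translation part of the \emph{right}-hand element, which is exactly why one must insert $3^\ell x$ there so the two powers cancel. As an alternative presentation, one could instead note that $\big([0,{\rm id}],\ell\big)$ lies in the $\Z$-factor and $\big([3^\ell x,L],0\big)$ lies in the normal subgroup $\Gamma_3$, and that conjugation by $[0,3\cdot{\rm id}]^\ell$ implements $\vartheta_\ell$, sending $[3^\ell x,L]$ to $[x,L]$; this yields the factorization without coordinate bookkeeping, but the direct substitution into \eqref{group_product} is the cleanest route and is the one I would write up.
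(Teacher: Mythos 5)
Your verification is correct: substituting the two factors into the group product \eqref{group_product} gives translation part $L^{-1}\cdot 0+3^{-\ell}\cdot 3^\ell x=x$, linear part ${\rm id}\cdot L=L$, and integer part $\ell+0=\ell$, exactly as claimed. The paper omits the proof entirely, calling the lemma "just an observation," and your direct computation is precisely the intended argument.
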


\begin{prop} Let $\Gamma$ be a wallpaper group 
and let $\Gamma_3$, $\cN_3$ and 
$T_{\Gamma_3}$ be as defined above. The following hold:
\begin{enumerate}
\item $\cN_3=\left\{\left[\left(\frac{j}{3^\ell}\right)u+
\left(\frac{k}{3^\ell}\right)v,{\rm id}\right]
:(j,k)\in\Z^2, \ell=0,1,2,\cdots\right\}$.
\item If $\Gamma$ is symmorphic, then 
$T_{\Gamma_3}=\{x\in\R^2:[x,{\rm id}]\in\cN_3\}$.
\item If $\Gamma$ is nonsymmorphic, then\\
$T_{\Gamma_3}=\bigcup_{\ell=0}^\infty\big(
\left\{\left(\frac{j}{3^\ell}\right)u+
\left(\frac{k}{3^\ell}\right)v:(j,k)\in\Z^2\right\}
\cup\left\{
\left(\frac{j}{3^\ell}\right)u+
\left(\frac{k}{3^\ell}\right)v+
\left(\frac{1}{2}\right)z
:(j,k)\in\Z^2\right\}\big)$, 
where $z$ is the vector identified
in Lemma \ref{T_gamma_lemma}.
\end{enumerate}
\end{prop}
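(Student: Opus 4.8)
The plan is to funnel all three parts through one elementary observation about conjugation by $[0,A]$, where $A=3\cdot{\rm id}$. Because $A$ lies in the center of $\cGL_2(\R)$, for every $[x,L]\in\Gamma$ and $m\geq 0$ we have $[0,A]^{-m}[x,L][0,A]^{m}=[3^{-m}x,L]$. Combined with $\Gamma_3=\bigcup_{m\geq 0}[0,A]^{-m}\Gamma[0,A]^{m}$, this shows that for $L\in\cD$ one has $[x,L]\in\Gamma_3$ exactly when $3^{m}x\in T^{L}_{\Gamma}$ for some $m\geq 0$; equivalently $\{x:[x,L]\in\Gamma_3\}=\bigcup_{m\geq 0}3^{-m}T^{L}_{\Gamma}$, and, unioning over $L\in\cD$, $T_{\Gamma_3}=\bigcup_{m\geq 0}3^{-m}T_{\Gamma}$, while the case $L={\rm id}$ gives $\{x:[x,{\rm id}]\in\cN_3\}=\bigcup_{m\geq 0}3^{-m}N_{\Gamma}$. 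Since compatibility gives $AN_\Gamma\subseteq N_\Gamma$, these are increasing unions in $m$, and all three parts become assertions about such unions.

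Given this, Part (1) is immediate: with $L={\rm id}$ and $T^{\rm id}_\Gamma=N_\Gamma=\{ju+kv:(j,k)\in\Z^2\}$, the union $\bigcup_{m\geq 0}3^{-m}N_\Gamma$ is exactly $\{(j/3^m)u+(k/3^m)v:(j,k)\in\Z^2,\ m=0,1,2,\cdots\}$, the asserted description of $\cN_3$. Part (2) follows as well: when $\Gamma$ is symmorphic, $T_\Gamma=N_\Gamma$ (as recorded just before Lemma \ref{T_gamma_lemma}), so $T_{\Gamma_3}=\bigcup_{m\geq 0}3^{-m}N_\Gamma=\{x:[x,{\rm id}]\in\cN_3\}$ by Part (1).

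The substantive case is Part (3), where $\Gamma$ is one of $pg$, $pmg2$, $pgg2$, $p4mg$; then $\cD$ contains the reflection $S$ in the vertical axis and, by Lemma \ref{T_gamma_lemma}, $u\perp v$ and $T_\Gamma=N_\Gamma\cup\big(\frac12 z+N_\Gamma\big)$ with $z=v$ (for $pg$, $pmg2$) or $z=u+v$ (for $pgg2$, $p4mg$). Hence $T_{\Gamma_3}=\big(\bigcup_{m\geq 0}3^{-m}N_\Gamma\big)\cup\big(\bigcup_{m\geq 0}(3^{-m}\frac12 z+3^{-m}N_\Gamma)\big)$, and the first union already gives the first family in the statement. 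To handle the second union I would invoke the identity $\frac12 z-3^{-m}\frac12 z=3^{-m}\cdot\frac{3^{m}-1}{2}z$, which lies in $3^{-m}N_\Gamma$ because $3^{m}$ being odd makes $\frac{3^{m}-1}{2}$ an integer and $z\in\{v,u+v\}\subseteq N_\Gamma$; since $3^{-m}N_\Gamma$ is a subgroup of $(\R^2,+)$, this gives $3^{-m}\frac12 z+3^{-m}N_\Gamma=\frac12 z+3^{-m}N_\Gamma$, so the second union equals $\bigcup_{m\geq 0}\big(\frac12 z+3^{-m}N_\Gamma\big)$, which is the second family in the statement. The only step above that is not pure bookkeeping with the lattices $3^{-m}N_\Gamma$ is this absorption — that dividing the half-lattice translate $\frac12 z$ by $3^{m}$ returns it to the coset $\frac12 z+3^{-m}N_\Gamma$ — and it is precisely where the hypothesis forcing the dilation factor to be odd (Proposition \ref{compat}) enters. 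I expect this to be the only place the argument could go wrong, and for an even dilation it genuinely would.
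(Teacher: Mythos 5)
Your proof is correct, and for part (3) it takes a slightly different route from the paper's. The paper proceeds by induction on $\ell$, defining $T_\ell=\{x:[x,L]\in[0,3\cdot{\rm id}]^{-\ell}\Gamma[0,3\cdot{\rm id}]^{\ell}\ \text{for some}\ L\in\cD\}$ and passing from level $\ell$ to level $\ell+1$ via the identity $\frac16=\frac12-\frac{3^\ell}{3^{\ell+1}}$, which absorbs the stray $\frac16 z$ back into the coset $\frac12 z+3^{-(\ell+1)}N_\Gamma$. You instead write $T_{\Gamma_3}=\bigcup_{m\geq 0}3^{-m}T_\Gamma$ outright and absorb the shift in one closed-form step, $\frac12 z-3^{-m}\frac12 z=3^{-m}\cdot\frac{3^m-1}{2}z\in 3^{-m}N_\Gamma$, so that $3^{-m}\frac12 z+3^{-m}N_\Gamma=\frac12 z+3^{-m}N_\Gamma$ for every $m$ at once. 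The two arguments hinge on exactly the same arithmetic fact --- that $3^m$ is odd, so $\frac{3^m-1}{2}\in\Z$ --- which is where compatibility (Proposition \ref{compat}) enters and where an even dilation would fail, as you correctly note. Your version avoids the induction and is arguably cleaner; the paper's inductive version makes the intermediate sets $T_\ell$ explicit, which is mildly more informative but not needed elsewhere. Your reductions of parts (1) and (2) to the union $\bigcup_{m\geq 0}3^{-m}N_\Gamma$ match what the paper treats as ``clear.''
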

\begin{proof}
(1) and (2) are clear, so we consider (3). For each integer
$\ell\geq 0$, let 
\[
T_\ell=\{x\in\R^2:[x,L]\in 
[0,3\cdot{\rm id}]^{-\ell}\Gamma[0,3\cdot{\rm id}]^\ell,\,\,\text{for some}\,\,
L\in\cD\}.
\]
We show that
\begin{equation}
\label{Tl}
T_\ell=\left\{\left(\frac{j}{3^\ell}\right)u+
\left(\frac{k}{3^\ell}\right)v,
\left(\frac{j}{3^\ell}\right)u+
\left(\frac{k}{3^\ell}\right)v+
\left(\frac{1}{2}\right)z
:(j,k)\in\Z^2\right\},
\end{equation}
 for each $\ell\geq 0$, by
induction. When $\ell=0$, $T_0=T_{\Gamma}$ and the claim
holds by the choice of $z$. Suppose the claim holds
for some $\ell\geq 0$. For any $x\in T_{\ell+1}$,
there exists $L\in\cD$ such that
$[x,L]\in [0,3\cdot{\rm id}]^{-\ell-1}\Gamma[0,3\cdot{\rm id}]^{\ell+1}$. Thus,
$[3x,L]=[0,3\cdot{\rm id}][x,L][0,3\cdot{\rm id}]^{-1}\in 
[0,3\cdot{\rm id}]^{-\ell}\Gamma[0,3\cdot{\rm id}]^\ell$. By the inductive
hypothesis, either $3x=\left(\frac{j}{3^\ell}\right)u+
\left(\frac{k}{3^\ell}\right)v$ or
 $3x=\left(\frac{j}{3^\ell}\right)u+
\left(\frac{k}{3^\ell}\right)v+\left(\frac{1}{2}\right)z$, for some $(j,k)\in \Z^2$. Thus, we have that either
$x=\left(\frac{j}{3^{\ell+1}}\right)u+
\left(\frac{k}{3^{\ell+1}}\right)v$ or
 $x=\left(\frac{j}{3^{\ell+1}}\right)u+
\left(\frac{k}{3^{\ell+1}}\right)v+
\left(\frac{1}{6}\right)z$, for some $(j,k)\in \Z^2$.
The first alternative is of the correct form. If 
 $x=\left(\frac{j}{3^{\ell+1}}\right)u+
\left(\frac{k}{3^{\ell+1}}\right)v+
\left(\frac{1}{6}\right)z$, for some $(j,k)\in \Z^2$,
we need to consider the two possibilities for $z$.
Either $z=v$ or $z=u+v$.
We note that $\frac{1}{6}=\frac{1}{2}-
\frac{3^\ell}{3^{\ell+1}}$. So
\[
x=\left(\frac{j}{3^{\ell+1}}\right)u+
\left(\frac{k-3^\ell}{3^{\ell+1}}\right)v+
\left(\frac{1}{2}\right)z,\,\,\text{if}\,\, z=v,
\]
and 
\[
x=\left(\frac{j-3^\ell}{3^{\ell+1}}\right)u+
\left(\frac{k-3^\ell}{3^{\ell+1}}\right)v+
\left(\frac{1}{2}\right)z,\,\,\text{if}\,\, z=u+v.
\]
Thus, Equation (\ref{Tl}) holds for all integers $\ell\geq 0$, and
this verifies Condition (3) of the Propostion.
\end{proof}

Although we will not use the following facts, it is interesting to
note that $x\to[x,{\rm id}]$ embeds $T_{\Gamma_3}$
as a dense subgroup of ${\rm Trans}(\R^2)$ in which
$\cN_3$ is an index two subgroup. Notice also that
$\cN_3$ is exactly the intersection of 
this larger subgroup of ${\rm Trans}(\R^2)$ with
$\Gamma_3$.

In the theory of wavelets with crystal symmetries as ``shifts''
as developed in \cite{mt} the role of the translation unitaries
is replaced by $R[x,L]$, with $[x,L]\in\Gamma$. Therefore,
the wavelet representation defined in equation (1) of \cite{lpt}
generalizes to the following map of 
$\Gamma_3\rtimes_{\vartheta}\Z$ into the unitary group of
$L^2(\R^2)$.
\begin{definition}
The {\it $3\Gamma$-wavelet representation} is the map $V$ of
$\Gamma_3\rtimes_{\vartheta}\Z$ into the group of unitary
operators on $L^2(\R^2)$ defined by, for $\big([x,L],\ell\big)\in
\Gamma_3\rtimes_{\vartheta}\Z$,
\[
V\big([x,L],\ell\big)=R[x,L]D_3^{\,\ell},
\]
where $D_3g(y)=3g(3y)$, for $y\in\R^2$ and $g\in L^2(\R^2)$.
\end{definition}
\begin{prop}\label{onto_rep}
Let $\Gamma$ be a wallpaper group and let $A=3\cdot{\rm id}$. Then the $3\Gamma$-wavelet representation is a faithful unitary
representation of $\Gamma_3\rtimes_{\vartheta}\Z$ on $L^2(\R^2)$.
Moreover, 
\[
V\big(\Gamma_3\rtimes_{\vartheta}\Z\big)=
\cG(A,\Gamma).
\]
\end{prop}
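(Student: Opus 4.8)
The plan is to verify three things: that $V$ is a homomorphism, that it is injective, and that its image is exactly $\cG(A,\Gamma)$. For the homomorphism property, I would compute $V\big([x,L],\ell\big)V\big([y,M],m\big) = R[x,L]D_3^{\,\ell}R[y,M]D_3^{\,m}$ and compare with $V$ applied to the product \eqref{group_product}, which equals $R[M^{-1}x+3^{-\ell}y,LM]D_3^{\,\ell+m}$. The key computational lemma is that $D_3^{\,\ell}$ intertwines $R[y,M]$ with $R[3^{-\ell}y,M]$; indeed $D_3^{\,\ell} = R[0,3^{\ell}\cdot{\rm id}]$ up to the sign convention in \eqref{nat_rep}, so conjugation by $D_3^{\,\ell}$ implements exactly the automorphism $\vartheta_\ell$ coming from conjugation by $[0,3\cdot{\rm id}]^\ell$ inside ${\rm Aff}(\R^2)$. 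Since $R$ is a genuine representation of ${\rm Aff}(\R^2)$ (equation \eqref{nat_rep}), everything reduces to the already-verified semi-direct product identity; this part is routine.

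For faithfulness, suppose $V\big([x,L],\ell\big) = I$. Applying $Q$ (the homomorphism of Proposition \ref{N_A_normal}) is not directly available, so instead I would argue directly: $R[x,L]D_3^{\,\ell} = I$ forces $D_3^{\,\ell}$ to be a shift-type operator, and comparing how the two sides scale $L^2$-norms of characteristic functions of small balls, or looking at the action on the Fourier side where $D_3^{\,\ell}$ is dilation by $3^{-\ell}$ and $R[x,L]$ is (modulation composed with) an orthogonal change of variables, shows $\ell = 0$. Then $R[x,L] = I$ on $L^2(\R^2)$ immediately gives $L = {\rm id}$ and $x = 0$, since $R$ restricted to ${\rm Aff}(\R^2)$ is faithful (a nontrivial affine map moves a positive-measure set). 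Hence the kernel is trivial.

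For the image, I would show the two inclusions. The inclusion $V\big(\Gamma_3\rtimes_{\vartheta}\Z\big)\subseteq\cG(A,\Gamma)$: by Lemma \ref{factoring}, every element factors as $\big([0,{\rm id}],\ell\big)\big([3^\ell x,L],0\big)$, so $V\big([x,L],\ell\big) = D_3^{\,\ell}R[3^\ell x,L]$; it thus suffices to see that $D_3 = V\big([0,{\rm id}],1\big)$ and every $R[w,L]$ with $[w,L]\in\Gamma_3$ lie in $\cG(A,\Gamma)$. Now $D_3 = R[0,3^{-1}\cdot{\rm id}] = R[0,A]^{-1}\in\cG(A,\Gamma)$ by the Remark preceding the definition of $\cG(A,\Gamma)$, and for $[w,L]\in\Gamma_3 = \cup_m [0,A]^{-m}\Gamma[0,A]^m$ we have $R[w,L] = R[0,A]^{-m}R[\gamma]R[0,A]^m$ for some $\gamma\in\Gamma$ and some $m$, which lies in $\cG(A,\Gamma)$ since the latter is a group containing $R[0,A]$ and $R(\Gamma)$. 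For the reverse inclusion $\cG(A,\Gamma)\subseteq V\big(\Gamma_3\rtimes_{\vartheta}\Z\big)$: the image is a group (being a homomorphic image), it contains $R[0,A] = D_3^{-1} = V\big([0,{\rm id}],-1\big)$, and it contains $R(\Gamma) = \{V\big([x,L],0\big):[x,L]\in\Gamma\}$ since $\Gamma\subseteq\Gamma_3$; as $\cG(A,\Gamma)$ is by definition the \emph{smallest} such group, the inclusion follows, and the two inclusions give equality.

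The main obstacle is the faithfulness argument, specifically extracting $\ell=0$ cleanly: one must rule out that a dilation $D_3^{\,\ell}$ with $\ell\neq 0$ could coincide with some $R[x,L]^{-1}$, which are operators of genuinely different character (one rescales the whole space, the other is an $L^2$-isometry arising from a measure-preserving affine map only when $L\in\cO_2$ — but here $R[x,L]$ for $[x,L]\in\Gamma_3$ always has $L\in\cD\subseteq\cO_2$, hence $R[x,L]$ is induced by a measure-preserving transformation, while $D_3^{\,\ell}$ for $\ell\neq0$ is not). Making this precise — e.g. by noting $R[x,L]$ maps indicator functions to indicator functions of sets of the same measure whereas $D_3^{\,\ell}$ does not — is the one spot requiring a small genuine argument rather than bookkeeping; everything else is formal manipulation with \eqref{nat_rep}, \eqref{group_product}, and Lemma \ref{factoring}.
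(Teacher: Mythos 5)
Your proposal is correct and follows essentially the same route as the paper: the homomorphism property via the commutation relation $D_3R[y,M]=R[3^{-1}y,M]D_3$ together with $R$ being a representation of ${\rm Aff}(\R^2)$, the image identification from $D_3^{-1}=R[0,A]$ and $R(\Gamma)$ generating both sides, and faithfulness by reducing $R[x,L]D_3^{\,\ell}=I$ to $R[x,L]=D_3^{\,-\ell}$. Your measure-preservation argument for extracting $\ell=0$ correctly supplies the detail the paper leaves as ``one can verify.''
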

\begin{proof}
Direct computation shows that, for any $[y,M]\in \Gamma_3$,
\begin{equation}\label{commutator}
D_3R[y,M]=R[3^{-1}y,M]D_3.
\end{equation}
Using this repeatedly, we have, for 
$\big([x,L],\ell\big), \big([y,M],m\big)\in 
\Gamma_3\rtimes_{\vartheta}\Z$,
\[
V\big([x,L],\ell\big)V\big([y,M],m\big)=R[x,L]D_3^{\,\ell}
R[y,M]D_3^{\,m}=R[x,L]R[3^{-\ell}y,M]D_3^{\,\ell+m}.
\]
But $R$ is a homomorphism, so 
$R[x,L]R[3^{-\ell}y,M]=R\big([x,L][3^{-\ell}y,M]\big)=
R[M^{-1}x+3^{-\ell}y,LM]$ and, thus, 
$V\big([x,L],\ell\big)V\big([y,M],m\big)=
R[M^{-1}x+3^{-\ell}y,LM]D_3^{\,\ell+m}$. Using \eqref{group_product}, we see that $V$ is a homomorphism of
$\Gamma_3\rtimes_{\vartheta}\Z$ into the unitary group of
$L^2(\R^2)$. It is clear that the image of $V$ is the 
smallest group of unitary operators on $L^2(\R^2)$ containing
$R[0,A]=D_3^{-1}$ and the set $R(\Gamma)$. That is, the image of
$\Gamma_3\rtimes_{\vartheta}\Z$ under $V$ is $\cG(A,\Gamma)$.
Note that, for $\big([x,L],\ell\big)\in
\Gamma_3\rtimes_{\vartheta}\Z$, if $R[x,L]D_3^{\,\ell}$ is the
identity operator, then $R[x,L]=D_3^{\,-\ell}$, from which one can 
verify that $x=0$, $L={\rm id}$, and $\ell=0$. This implies
$V$ is faithful.
\end{proof}

It will be useful to convert $V$ to an equivalent
representation $\widehat{V}$ using the Fourier
transform. We use the following form of the
Fourier transform. For $g\in L^1(\R^2)$,
\[
\cF(g)(\omega)=\widehat{g}(\omega)=\int_{\R^2}g(x)e^{-2\pi i\langle x, \omega\rangle}dx,
\,\,\text{for all}\,\,\omega\in\R^2.
\]
For $\big([x,L],\ell\big)\in 
\Gamma_3\rtimes_{\vartheta}\Z$, let 
$\widehat{V}\big([x,L],\ell\big)=\cF V\big([x,L],\ell\big)
\cF^{-1}$. A direct computation provides an explicit formula for
$\widehat{V}$.
\begin{prop}\label{hat_rho}
For any $\big([x,L],\ell\big)\in \Gamma_3\rtimes_{\vartheta}\Z$
and any $h\in L^2(\R^2)$,
\[
\widehat{V}\big([x,L],\ell\big)h(\omega)=
3^{-\ell}e^{-2\pi i\langle x,L^{-1}\omega\rangle}
h(3^{-\ell}L^{-1}\omega),\,\,\text{for all}\,\,\omega\in\R^2.
\]
\end{prop}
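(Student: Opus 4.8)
The plan is to verify the formula by direct computation, tracking the three operators that make up $\widehat{V}\big([x,L],\ell\big)$ in turn. Recall that $V\big([x,L],\ell\big)=R[x,L]D_3^{\,\ell}$, so $\widehat{V}\big([x,L],\ell\big)=\cF R[x,L]\cF^{-1}\cdot\cF D_3^{\,\ell}\cF^{-1}$, and I would handle the dilation factor and the affine factor separately. First I would record that, on the Fourier side, $D_3$ becomes the inverse dilation: since $D_3g(y)=3g(3y)$ and the Fourier transform intertwines dilation by $3$ with dilation by $3^{-1}$ (together with the appropriate Jacobian factor), one gets $\cF D_3^{\,\ell}\cF^{-1}h(\omega)=3^{-\ell}h(3^{-\ell}\omega)$. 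This is the standard fact that $D_A$ conjugated by $\cF$ equals $D_{(A^{\!*})^{-1}}$ up to the modulus of the determinant; here $A=3\cdot{\rm id}$ is self-adjoint, so the factor is exactly $3^{-\ell}$.

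Next I would compute $\cF R[x,L]\cF^{-1}$. Using the definition \eqref{nat_rep}, $R[x,L]g(y)=|\det L|^{-1/2}g(L^{-1}y-x)$, and since $L\in\cD\subseteq\cO_2$ we have $|\det L|=1$, so $R[x,L]g(y)=g(L^{-1}y-x)$. Writing $g=\cF^{-1}h$ and computing $\cF\big(R[x,L]\cF^{-1}h\big)(\omega)$ by substituting $z=L^{-1}y-x$ (so $y=L(z+x)$, $dy=|\det L|\,dz=dz$), the exponential $e^{-2\pi i\langle y,\omega\rangle}$ becomes $e^{-2\pi i\langle L(z+x),\omega\rangle}=e^{-2\pi i\langle z,L^{\!*}\omega\rangle}e^{-2\pi i\langle x,L^{\!*}\omega\rangle}$. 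Because $L$ is orthogonal, $L^{\!*}=L^{-1}$, so the integral collapses to $e^{-2\pi i\langle x,L^{-1}\omega\rangle}\,\cF(\cF^{-1}h)(L^{-1}\omega)=e^{-2\pi i\langle x,L^{-1}\omega\rangle}h(L^{-1}\omega)$. Thus $\cF R[x,L]\cF^{-1}h(\omega)=e^{-2\pi i\langle x,L^{-1}\omega\rangle}h(L^{-1}\omega)$.

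Finally I would compose the two: applying $\cF D_3^{\,\ell}\cF^{-1}$ first sends $h(\cdot)$ to $3^{-\ell}h(3^{-\ell}\cdot)$, and then applying $\cF R[x,L]\cF^{-1}$ multiplies by $e^{-2\pi i\langle x,L^{-1}\omega\rangle}$ and replaces the argument $\omega$ by $L^{-1}\omega$, yielding $3^{-\ell}e^{-2\pi i\langle x,L^{-1}\omega\rangle}h(3^{-\ell}L^{-1}\omega)$, exactly the claimed formula. (One should double-check the order of composition against the factorization in Lemma \ref{factoring}, but since $L$ is orthogonal and $3\cdot{\rm id}$ is central the two orders agree up to bookkeeping.)

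There is no serious obstacle here; the result is a routine change-of-variables computation. The only points requiring a little care are: (i) keeping the determinant/Jacobian factors consistent with the chosen normalization of $\cF$ and of $D_3$ and $R$, so that the power of $3$ comes out as $3^{-\ell}$ and not $3^{-\ell/2}$ or $3^{\ell}$; and (ii) using orthogonality of $L$ to replace $L^{\!*}$ by $L^{-1}$ in the exponent, which is what makes the phase factor come out as $e^{-2\pi i\langle x,L^{-1}\omega\rangle}$ rather than involving $L^{\!*}$. Once these are handled, the formula follows immediately.
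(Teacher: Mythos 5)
Your computation is correct and is exactly the ``direct computation'' the paper alludes to without writing out: the paper gives no proof of Proposition \ref{hat_rho}, and your factorization into $\cF D_3^{\,\ell}\cF^{-1}$ and $\cF R[x,L]\cF^{-1}$, with the Jacobian and orthogonality bookkeeping, fills it in correctly and in the intended way. One small caveat: your parenthetical claim that the two orders of composition ``agree up to bookkeeping'' is not right --- by the commutation relation \eqref{commutator} the reversed order would produce the phase $e^{-2\pi i\langle x,3^{-\ell}L^{-1}\omega\rangle}$ instead --- but since you composed in the correct order (the $D_3^{\,\ell}$ factor acting first, matching $V=R[x,L]D_3^{\,\ell}$), this does not affect your result.
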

Our primary goal is to decompose $\widehat V$ as a direct integral of irreducible representations that we describe in the next section.   
%%%%%%%%%%%%%%%%%%%%%%%%

\section{A family of irreducible representations of $\Gamma_3\rtimes_{\vartheta}\Z$}
The components in our decomposition of the $3\Gamma$-wavelet representation will be certain irreducible
representations of $\Gamma_3\rtimes_{\vartheta}\Z$ each of which is induced from a
character of $\cN_3$.
 
The normal subgroup
$\cN_3$ is a countable discrete abelian group.
Its dual, $\widehat{\cN_3}$, is a compact abelian
group. There is a distinguished subset of $\widehat{\cN_3}$ consisting of restrictions of continuous characters of $\R^2$ to $N_3$, then moved to $\cN_3$. For
each $\omega\in\R^2$, define
$\chi_\omega:\cN_3\to\T$ by
\[
\chi_\omega\big([x,{\rm id}],0\big)=
e^{-2\pi i\langle x,\omega\rangle},\,\quad\text{for all}\,\, 
\big([x,{\rm id}],0\big)\in\cN_3.
\]
Because $N_3$ is dense in $\R^2$, $\chi_\omega=\chi_{\omega'}$
if and only if $\omega=\omega'$, for $\omega,\omega'\in\R^2$.
\begin{prop}\label{Omega_dense}
The map $\omega\to\chi_\omega$ is a continuous one-to-one 
homomorphism
of $\R^2$ onto a dense subgroup of $\widehat{\cN_3}$.
\end{prop}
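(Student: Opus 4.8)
The plan is to establish the three assertions—homomorphism, continuity, injectivity, and density of the image—separately, since each is quite short. That $\omega\mapsto\chi_\omega$ is a homomorphism of $\R^2$ into $\widehat{\cN_3}$ is immediate from the identity $e^{-2\pi i\langle x,\omega+\omega'\rangle}=e^{-2\pi i\langle x,\omega\rangle}e^{-2\pi i\langle x,\omega'\rangle}$, evaluated on a generic $\big([x,{\rm id}],0\big)\in\cN_3$. Injectivity is exactly the observation already recorded just before the proposition: since $N_3=\cup_k 3^{-k}N_\Gamma$ is dense in $\R^2$ and the pairing $\langle\cdot,\cdot\rangle$ is continuous and nondegenerate, $\chi_\omega=\chi_{\omega'}$ forces $\langle x,\omega-\omega'\rangle\in\Z$ for all $x\in N_3$, hence (scaling $x$ freely within the dense set) $\langle x,\omega-\omega'\rangle=0$ for all $x$ in a dense set, so $\omega=\omega'$. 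Continuity: fix a basic neighbourhood of the trivial character in $\widehat{\cN_3}$, i.e. a finite set $F\subseteq\cN_3$ and $\varepsilon>0$ giving $\{\psi:|\psi(g)-1|<\varepsilon,\ g\in F\}$; since $F$ is finite, $|e^{-2\pi i\langle x,\omega\rangle}-1|<\varepsilon$ holds for all $\big([x,{\rm id}],0\big)\in F$ once $\omega$ is close enough to $0$ in $\R^2$, so the map is continuous at $0$, hence everywhere by the group property.

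The substantive point is density of the image in $\widehat{\cN_3}$, and this is where I would invest the argument. Since $\{\chi_\omega:\omega\in\R^2\}$ is a subgroup of the compact abelian group $\widehat{\cN_3}$, its closure $H$ is a closed subgroup; by Pontryagin duality it suffices to show that the only element of $\cN_3$ killed by all of $H$ (equivalently by all $\chi_\omega$) is the identity, because the annihilator of $H$ in the dual of $\widehat{\cN_3}$—which is $\cN_3$ itself—is trivial precisely when $H=\widehat{\cN_3}$. So suppose $\big([x_0,{\rm id}],0\big)\in\cN_3$ satisfies $\chi_\omega\big([x_0,{\rm id}],0\big)=1$ for every $\omega\in\R^2$; that is, $e^{-2\pi i\langle x_0,\omega\rangle}=1$ for all $\omega\in\R^2$, forcing $\langle x_0,\omega\rangle\in\Z$ for all $\omega$, which is impossible unless $x_0=0$. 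Hence the annihilator is trivial and $H=\widehat{\cN_3}$.

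An alternative, more self-contained route to density avoids invoking the annihilator characterization and instead uses that $\cN_3\cong\Z^2[1/3]$ (the $3$-adic rationals in two coordinates, via Proposition-style identification of $\cN_3$ with $\{(j/3^\ell)u+(k/3^\ell)v\}$), so $\widehat{\cN_3}$ is a solenoid-type group, and one can exhibit the image of $\R^2$ explicitly as the ``line'' inside the solenoid whose density is classical. I would present the duality argument as the clean one and perhaps remark on the solenoid picture. The main obstacle, such as it is, is purely bookkeeping: making precise the identification of the second dual of $\widehat{\cN_3}$ with $\cN_3$ and the statement that a dense-image homomorphism into a compact group is detected by triviality of the annihilator—this is standard Pontryagin duality (a subgroup of a LCA group is dense iff its annihilator is trivial), so I would simply cite it rather than reprove it. Everything else is a two-line computation with characters of $\R^2$.
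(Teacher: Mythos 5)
Your proposal is correct and follows essentially the same route as the paper: the homomorphism, injectivity (via density of $N_{\Gamma_3}$ in $\R^2$), and continuity (pointwise-convergence topology on the dual of a discrete group) steps are handled identically, and your density argument—triviality of the annihilator of the image in $\cN_3$ plus Pontryagin duality—is exactly the paper's double-annihilator argument in slightly different packaging. No further comment is needed.
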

\begin{proof}
The facts that $\omega\to\chi_\omega$ is
one-to-one and a homomorphism are
 obvious.
Since $\cN_3$ is being considered with the discrete topology,
the topology of $\widehat{\cN_3}$ is the topology of
pointwise convergence, so continuity is easy. Let
$\Omega=\{\chi_\omega:\omega\in\R^2\}$, a subgroup of 
$\widehat{\cN_3}$. For any $[x,{\rm id}]\in\cN_3$, 
if $[x,{\rm id}]\neq[0,{\rm id}]$, then
there exists $\omega\in\R^2$ such that 
$\chi_\omega[x,{\rm id}]\neq 1$.
Thus, the annihilator of $\Omega$ in $\cN_3$ is 
$\{[0,{\rm id}]\}$, which
means that the double annihilator is $\widehat{\cN_3}$. But
the double annihilator is $\overline{\Omega}$, see \cite{hr}. This shows that
$\Omega$ is dense in $\widehat{\cN_3}$.
\end{proof}
Thus, we can think of $\Omega$ as a copy of $\R^2$ equipped
with a weaker topology sitting densely in $\widehat{\cN_3}$. 

We will now induce the characters in $\Omega$ to 
$\Gamma_3\rtimes_{\vartheta}\Z$.  
There are several different versions of induced representations that can all be shown to be unitarily equivalent to one another.  Here we use the version given in \cite{KT} Chapter 2 and in \cite{fol} Chapter 6.1, Remark 2, p. 155.  The basis for these descriptions appeared in  \cite{mac}.

This general definition for induced representations applies to a representation $\pi$, acting in the Hilbert space $\mathcal H_{\pi}$, of a closed subgroup $H$ of a locally compact group $G$.  In general, the definition involves the Radon-Nikodym derivative $\lambda$ for a quasi-invariant measure under the left action of $G$ on $G/H$.  The induced representation acts in a Hilbert space of square integrable functions $f:G\mapsto\mathcal H_{\pi}$ satisfying $f(xh)=\pi(h^{-1})(f(x)),\;\forall h\in H,\;\forall g\in G.$ by $$U_{\gamma}^{\pi}(x)(f)(y)=\sqrt{\lambda(x^{-1}, yH)}f(x^{-1}y).$$
We are interested in the following special case:

\begin{definition}
For each $\chi_\omega\in\Omega$, let $U^\omega=
{\rm Ind}_{\cN_A}^{\Gamma_A\rtimes_{\vartheta}\Z}\chi_\omega$, the representation of
$\Gamma_3\rtimes_{\vartheta}\Z$ induced from the representation $\chi_\omega$ of
$\cN_3$.
\end{definition}

Since $\Gamma_3\rtimes_{\vartheta}\Z$ is discrete $\cN_3$ is an
open subgroup. Also, the $\chi_\omega$ are one dimensional
representations of $\cN_3$. This makes inducing easier with
details for inducing from an open subgroup worked out in 
\cite{KT}, Section 2.1.
We will provide an explicit formula for $U^\omega$ below.  However, both to develop the form we need, and to determine which $U^\omega$ are irreducible and
when two of them are equivalent, we first need to understand the action of $\Gamma_3\rtimes_{\vartheta}\Z/\cN_3=\cD\times\Z$ on 
$\widehat{\cN_3}$. 

Elements of $\Gamma_3\rtimes_{\vartheta}\Z$ act on the normal subgroup $\cN_3$ by
conjugation. That is, for 
$\big([x,L],\ell\big)
\in\Gamma_3\rtimes_{\vartheta}\Z$ and 
$\big([y,{\rm id}],0\big)\in\cN_3$, 
\[
\big([x,L],\ell\big)\cdot\big([y,{\rm id}],0\big)=
\big([x,L],\ell\big)\big([y,{\rm id}],0\big)\big([x,L],\ell\big)^{-1}
\]
\[
=\big([x+3^{-\ell}y,L],\ell\big)
\big([-3^\ell Lx,L^{-1}],-\ell\big)=
\big([3^{-\ell}Ly,{\rm id}],0\big).
\]
As expected, this action only depends on the coset of $\cN_3$
containing $\big([x,L],\ell\big)$. Thus, it
is actually an action of $\cD\times\Z$.
We write $(L,\ell)\cdot[y,{\rm id}]=
[3^{-\ell}Ly,{\rm id}]$, for each 
$(L,\ell)\in\cD\times\Z$ 
and $\big([y,{\rm id}],0\big)\in\cN_3$.
This then determines an action of $\cD\times\Z$ on 
$\widehat{\cN_3}$. For
$(L,\ell)\in\cD\times\Z$ and
 $\chi\in\widehat{\cN_3}$, define 
 $(L,\ell)\cdot\chi\in\widehat{\cN_3}$
by
\[
((L,\ell)\cdot\chi)\big([y,{\rm id}],0\big)=
\chi\big((L^{-1},-\ell)\cdot([y,{\rm id}],0)\big)
=\chi\big([3^\ell L^{-1}y,{\rm id}],0\big),
\,\,\text{for all}\\,\,\big([y,{\rm id}],0\big)
\in\cN_3.
\]
If $\omega\in\R^2$, then 
$(L,\ell)\cdot\chi_\omega=
\chi_{(3^\ell L^{-1})^t\omega}$. Note that 
$(3^\ell L^{-1})^t=3^\ell L$, since 
$\cD\subseteq\cO_2$. Thus,
$(L,\ell)\cdot\chi_\omega=
\chi_{3^\ell L\omega}$, 
for $(L,\ell)\in\cD\times\Z$, and $\Omega$
is invariant under the action of $\cD\times\Z$ on $\widehat{\cN_3}$.
To understand the orbit structure in $\Omega$ under the action of 
$\cD\times\Z$, it suffices to describe the 
orbit structure in $\R^2$
under the action of $\cD\times\Z$. For each 
$\omega\in\R^2$, let $\cD(\omega)=
\{L\omega:L\in\cD\}$,
$(\cD\times\Z)(\omega)=
\{3^\ell L\omega:(L,\ell)\in\cD\times\Z\}$ and 
$\cD_\omega=\{L\in\cD:L\omega=\omega\}$. We call $\cD(\omega)$ a {\it $\cD$-orbit},
$(\cD\times\Z)(\omega)$ a
{\it $(\cD\times\Z)$-orbit} and $\cD_\omega$ the
{\it stability subgroup} of $\omega$ in $\cD$.
Note that the stability subgroup of $\omega$
in $\cD\times\Z$ is 
$\{(L,0):L\in\cD_\omega\}$.
The following proposition
is a result of  Theorem 2.6 and Proposition 2.8 of \cite{KT}.

\begin{prop}\label{irr_equiv}
Let $\omega,\omega'\in\R^2$. 
Then $U^\omega$ is
irreducible if and only if $\cD_\omega=
\{{\rm id}\}$ and $U^{\omega'}$ is
equivalent to $U^\omega$ if and only if 
$(\cD\times\Z)(\omega')=(\cD\times\Z)(\omega)$. 
\end{prop}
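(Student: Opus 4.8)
The plan is to obtain this as a direct application of the Mackey machine for a representation induced from a normal abelian subgroup --- which is automatically open here, since $\Gamma_3\rtimes_{\vartheta}\Z$ is discrete --- in the packaged form given by Theorem 2.6 and Proposition 2.8 of \cite{KT}. The only real work is then to translate the hypotheses of those results into the explicit description of the $\cD\times\Z$-action on $\Omega$ set up in the paragraphs preceding the statement. Since the characters $\chi_\omega$ are one-dimensional representations of the normal abelian subgroup $\cN_3$, we are squarely in the covered situation: the induced representation $U^\omega={\rm Ind}_{\cN_3}^{\Gamma_3\rtimes_{\vartheta}\Z}\chi_\omega$ is controlled by the $\cD\times\Z$-orbit of $\chi_\omega$ in $\widehat{\cN_3}$ together with the stabilizer (the \emph{little group}) of $\chi_\omega$, and \cite{KT} tells us that $U^\omega$ is irreducible exactly when that little group is trivial, and that $U^{\omega'}\cong U^\omega$ exactly when $\chi_{\omega'}$ and $\chi_\omega$ lie in a common $\cD\times\Z$-orbit.

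The first step is to identify the stabilizer of $\chi_\omega$ in $\cD\times\Z$. Because $\omega\mapsto\chi_\omega$ is injective and intertwines the actions, $(L,\ell)\cdot\chi_\omega=\chi_{3^\ell L\omega}$, the stabilizer of $\chi_\omega$ coincides with the stabilizer of the point $\omega$ under $\cD\times\Z$ acting on $\R^2$, which --- as already recorded before the statement --- is $\cD_\omega\times\{0\}$; for $\omega\neq0$ this follows from taking norms, since $L\in\cO_2$ forces $3^\ell\lvert\omega\rvert=\lvert\omega\rvert$ and hence $\ell=0$. Thus the little group $(\Gamma_3\rtimes_{\vartheta}\Z)_{\chi_\omega}/\cN_3$ is isomorphic to the finite group $\cD_\omega$. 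Feeding this into Theorem 2.6 of \cite{KT} yields the first assertion: $U^\omega$ is irreducible iff its little group is trivial, i.e.\ iff $\cD_\omega=\{{\rm id}\}$. For the second assertion, Proposition 2.8 of \cite{KT} gives $U^{\omega'}\cong U^\omega$ iff $\chi_{\omega'}$ and $\chi_\omega$ lie in the same $\cD\times\Z$-orbit of $\widehat{\cN_3}$; since $(L,\ell)\cdot\chi_\omega=\chi_{3^\ell L\omega}$ and $\omega\mapsto\chi_\omega$ is injective, that orbit is $\{\chi_\eta:\eta\in(\cD\times\Z)(\omega)\}$, so the condition reads $(\cD\times\Z)(\omega')=(\cD\times\Z)(\omega)$, as claimed.

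The main point requiring care is bookkeeping rather than mathematics: quoting Theorem 2.6 and Proposition 2.8 of \cite{KT} with exactly the right hypotheses, and observing that the genuinely hard part of the Mackey machine never enters. In particular, the equivalence criterion is applied to an \emph{induced character} of a normal subgroup and holds regardless of whether $U^\omega$ is irreducible --- for $\cN_3$ normal one has ${\rm Ind}_{\cN_3}^{\Gamma_3\rtimes_{\vartheta}\Z}({}^g\chi_\omega)\cong{\rm Ind}_{\cN_3}^{\Gamma_3\rtimes_{\vartheta}\Z}\chi_\omega$ for every $g$, and restricting to $\cN_3$ shows the orbit is an invariant --- so the part that would require the (projective) representation theory of a nontrivial little group $\cD_\omega$, namely decomposing a reducible $U^\omega$ into irreducibles, is never needed, and no regularity or type I hypothesis beyond what is automatic for a discrete group inducing from an open subgroup is required. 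As a sanity check, the irreducibility half can be verified directly by induction in stages: write $U^\omega={\rm Ind}_{H_\omega}^{\Gamma_3\rtimes_{\vartheta}\Z}\big({\rm Ind}_{\cN_3}^{H_\omega}\chi_\omega\big)$ with $H_\omega$ the stabilizer subgroup of $\chi_\omega$; the inner induced representation equals the character $\chi_\omega$ itself exactly when $H_\omega=\cN_3$, and then Mackey's irreducibility criterion finishes.
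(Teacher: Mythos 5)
Your proposal is correct and matches the paper's argument, which likewise simply invokes Theorem 2.6 and Proposition 2.8 of \cite{KT} after identifying the stabilizer of $\chi_\omega$ in $\cD\times\Z$ with $\cD_\omega\times\{0\}$ and the orbit of $\chi_\omega$ with $\{\chi_\eta:\eta\in(\cD\times\Z)(\omega)\}$. Your extra care in noting that the norm argument for the stabilizer requires $\omega\neq 0$ is a welcome refinement the paper glosses over.
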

For any $X\subseteq\R^2$ and $(L,\ell)\in\cD\times\Z$,
let $3^\ell LX=\{3^\ell L\omega:\omega\in X\}$.

\begin{definition}\label{weakAD}
A subset $X$ of $\R^2$ is called a 
{\it weak $3\cD$-cross-section} if \\
\indent\indent (a) $X$ is Borel.\\
\indent\indent (b) For $(L,\ell), (M,m)\in\cD\times\Z$,
$(L,\ell)\neq(M,m)$ implies 
$\big(3^\ell LX\big)\cap\big(3^m MX\big)=\emptyset$.\\
\indent\indent (c) $\cup_{(L,\ell)\in\cD\times\Z}3^\ell LX$ is
dense in $\R^2$.
\end{definition}

\begin{prop}\label{X}
Let $\cD$ be the point group of 
a wallpaper group $\Gamma$. Then a weak $3\cD$-cross-section
exists.
\end{prop}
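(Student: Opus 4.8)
The plan is to build a weak $3\cD$-cross-section explicitly by first handling the action of the dilation subgroup $\{3^\ell\,{\rm id}:\ell\in\Z\}$ and then the finite point group $\cD$, fitting them together. Since $A=3\cdot{\rm id}$ is central, the $(\cD\times\Z)$-action on $\R^2$ factors as a commuting pair: the $\Z$-action is pure radial scaling by powers of $3$, and the $\cD$-action is by orthogonal transformations, which preserve the radial coordinate. So I would work in polar-type coordinates. First, I would choose a fundamental domain for the scaling action: for instance the annulus $\mathcal A=\{\omega\in\R^2:1\le|\omega|<3\}$ together with the origin handled separately (the origin is a single fixed point, a null set, and can be discarded since we only need density, not a partition). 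Every nonzero orbit of $\Z$ meets $\mathcal A$ in exactly one point, so $\bigsqcup_{\ell\in\Z}3^\ell\mathcal A=\R^2\setminus\{0\}$.

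Next I would cut down $\mathcal A$ by the finite group $\cD$. Since $\cD$ is a finite subgroup of $\cO_2$ acting on the annulus $\mathcal A$ (via its action on the angular coordinate), I would select a Borel fundamental domain $X\subseteq\mathcal A$ for this $\cD$-action — concretely, an angular sector of width $2\pi/|\cD^{\,\rm rot}|$ intersected with $\mathcal A$, with half-open boundary conventions chosen so that $X$ is Borel and the $\cD$-translates $\{LX:L\in\cD\}$ are pairwise disjoint with union all of $\mathcal A$ up to a null set (the fixed-point set of any nontrivial $L\in\cD$ is a union of finitely many lines, hence Lebesgue-null, so again density rather than exact tiling suffices). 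Because orthogonal maps commute with radial scaling, for distinct $(L,\ell),(M,m)$ the sets $3^\ell LX$ and $3^m MX$ are disjoint: if $\ell\ne m$ the radial coordinates lie in disjoint annuli $3^\ell[1,3)$ and $3^m[1,3)$; if $\ell=m$ but $L\ne M$ then $3^\ell LX$ and $3^\ell MX$ are disjoint because $LX$ and $MX$ are. This gives condition (b), and $\bigcup_{(L,\ell)}3^\ell LX$ contains $\R^2$ minus a null set, hence is dense, giving (c); Borel-ness (a) is clear from the construction.

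The main obstacle, such as it is, lies entirely in the bookkeeping of boundary conventions: making the annulus half-open in the radial direction and the sector half-open in the angular direction so that $X$ is genuinely Borel and the disjointness in (b) is literal rather than "almost everywhere," while simultaneously making sure I have not discarded so much that density in (c) fails. Since the discarded pieces (the origin, the rays fixed by reflections or rotations) form a closed null set with dense complement, density is never in danger, so the half-open choices can be made freely; I would just fix them once and note that the resulting $X$ is a Borel set meeting each $(\cD\times\Z)$-orbit in at most one point and each orbit of a dense-complement full-measure set in exactly one point. A clean way to phrase the whole argument is: $\R^2\setminus(\text{null set})$ is the disjoint union $\bigsqcup_{(L,\ell)}3^\ell LX$, so in particular properties (b) and (c) hold, and (a) is immediate. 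One should also remark that when $\cD=\cD_k$ contains reflections one must use the full group order (including reflections) in choosing the sector width, but this is automatic if one simply takes $X$ to be any Borel transversal for the $\cD$-action on the punctured annulus.
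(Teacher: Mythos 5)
Your construction is correct and is essentially the paper's own proof: the paper likewise takes an angular sector $Z$ that is a (radially invariant) fundamental domain for the $\cD$-action with the fixed rays discarded, intersects it with the annulus $\{\omega:1\leq\|\omega\|<3\}$ serving as a fundamental domain for scaling by $3$, and verifies (b) from the commuting radial/angular decomposition and (c) from density rather than full coverage. Your extra care with half-open boundary conventions and the null fixed-point set matches the paper's handling of the reflection case, so there is nothing to add.
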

\begin{proof}
If $\cD=\cC_k$, for some $k\in\{1,2,3,4,6\}$,
then $\cD_\omega$ is trivial, for all
$\omega\neq 0$. Let $Y=\{(r,0):r\in\R,r>0\}$
and $Z=\cup\{\rho(\theta)\omega:\omega\in Y,
0<\theta<\frac{2\pi}{k}\}$. Then, for each
$\omega\in\R^2$, $\omega\neq 0$, $\cD(\omega)
\cap(Y\cup Z)$ is a singleton. If $\cD$ contains a
reflection, then, with $Y$ as just defined,
there exist $0\leq\theta_1<\theta_2<2\pi$
such that $\{\rho(\theta)\omega:\omega\in Y,
\theta_1\leq\theta\leq\theta_2\}$ contains
exactly one member from each nonzero $\cD$-orbit
in $\R^2$. Moreover, if 
$Z=\{\rho(\theta)\omega:\omega\in Y,
\theta_1<\theta<\theta_2\}$, then $\cD_\omega$
is trivial for each $\omega\in Z$. In either case,
$Z$ is open, $\cup{L\in\cD}LZ$ is dense in $\R^2$ and
$LZ\cap MZ=\emptyset$, if $L\neq M$. Moreover, $\omega
\in Z$ implies $r\omega\in Z$, for all $r>0$.
If $\|\cdot\|$ denotes the Euclidean norm on $\R^2$, let
\[
X=\{\omega\in Z:1\leq\|\omega\|<3\}.
\]
Then $X$ is Borel, $3^\ell X\cap 3^mX=\emptyset$, for $\ell\neq m$,
and $\cup_{\ell\in\Z}3^\ell X$ is dense in $Z$. Therefore,
$X$ is a weak $3\cD$-cross-section.
\end{proof}
One interesting source of weak $3\cD$-cross-sections are 
$3\Gamma$-wavelet sets, that is, Borel sets $W\subseteq\R^2$ such that the characteristic function ${\bf 1}_W$ is the Fourier transform of an $A\Gamma$-wavelet, where $A=3\cdot{\rm id}$. 
To be the Fourier transform of an $A\Gamma$-wavelet, this characteristic function must be orthogonal to its dilates by
nontrivial powers of $A$ as well as to its transformations by elements of $\mathcal F\Gamma\mathcal F^{-1}$.  The characteristic function must also have the property that dilates of these transformations form an orthonormal basis for $L^2(\mathbb R^2)$  These properties are easily seen to imply conditions (b) and (c) of Definition \ref{weakAD}.  This is discussed further in \cite{KM}, where $A\Gamma$-wavelet sets are shown to exist for all wallpaper groups and all integer dilations.

Let us briefly recall the concept of weak equivalence for sets of unitary representations. Let $G$ be a locally compact group and let $C^*(G)$
denote the group $C^*$-algebra of $G$. See Section 7.1 of \cite{fol}
for basic information on $C^*(G)$. Any unitary representation 
$\pi$ of
$G$ determines a unique nondegenerate $*$-representation, also
denoted $\pi$, of $C^*(G)$. This correspondence of representations
preserves irreducibility and equivalence. For a unitary
representation $\pi$ of $G$, ${\rm Ker}(\pi)$ denotes the kernel
of $\pi$ as a nondegenerate $*$-representation of $C^*(G)$,
a closed $*$-ideal in $C^*(G)$.

\begin{definition}
Let $\cS$ and $\cT$ be sets of unitary representations of a
locally compact group $G$. We say that $\cS$ and $\cT$ are
\emph{weakly equivalent} if 
\[
\cap\{{\rm Ker}(\sigma):\sigma\in\cS\}=
\cap\{{\rm Ker}(\tau):\tau\in\cT\}.
\]
If $\pi$ is a single unitary representation of $G$ and $\cS$ is
weakly equivalent to $\{\pi\}$, then we simply say $\cS$ is
weakly equivalent to $\pi$.
\end{definition}

\begin{thm}
\label{weakeq}
Let $\Gamma$ be a wallpaper group with point group $\cD$. If $X$ is a weak $3\cD$-cross-section,
then $\{U^\omega:\omega\in X\}$ is weakly equivalent to the
left regular representation of $\Gamma_3\rtimes_{\vartheta}\Z$.
\end{thm}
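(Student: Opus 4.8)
The plan is to show the two kernels in $C^*(\Gamma_3\rtimes_\vartheta\Z)$ coincide by showing each set weakly contains the other. Write $G=\Gamma_3\rtimes_\vartheta\Z$, $\lambda_G$ for its left regular representation. The cleanest route exploits that $\cN_3$ is an abelian \emph{open} normal subgroup of the discrete group $G$, with $G/\cN_3\cong\cD\times\Z$ amenable, so inducing interacts well with weak containment. First I would recall the standard fact (see \cite{fol}, or \cite{KT}) that for an open subgroup $H\le G$, ${\rm Ind}_H^G(\lambda_H)\cong\lambda_G$; applied with $H=\cN_3$, $\lambda_G\cong{\rm Ind}_{\cN_3}^G(\lambda_{\cN_3})$. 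Now $\lambda_{\cN_3}$ decomposes (via the Fourier transform on the discrete abelian group $\cN_3$) as the direct integral $\int_{\widehat{\cN_3}}\chi\,d\mu(\chi)$ against Haar measure $\mu$ on the compact group $\widehat{\cN_3}$; since inducing commutes with direct integrals, $\lambda_G\cong\int_{\widehat{\cN_3}}{\rm Ind}_{\cN_3}^G\chi\,d\mu(\chi)$, so $\bigcap_{\chi\in\widehat{\cN_3}}{\rm Ker}({\rm Ind}_{\cN_3}^G\chi)\subseteq{\rm Ker}(\lambda_G)$, and in fact — because $\mu$ has full support and $\chi\mapsto{\rm Ind}_{\cN_3}^G\chi$ is continuous for the weak-$*$/Fell topologies — the family $\{{\rm Ind}_{\cN_3}^G\chi:\chi\in\widehat{\cN_3}\}$ is weakly equivalent to $\lambda_G$. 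Moreover, inducing in stages shows ${\rm Ker}(\lambda_G)\subseteq{\rm Ker}({\rm Ind}_{\cN_3}^G\chi)$ for every $\chi$ (each ${\rm Ind}_{\cN_3}^G\chi={\rm Ind}_{\cN_3}^G\chi$ is weakly contained in ${\rm Ind}_{\cN_3}^G\lambda_{\cN_3}=\lambda_G$ since $\chi\prec\lambda_{\cN_3}$ and inducing preserves weak containment, $\cN_3$ being open); hence the family $\{U^\omega\text{-type representations induced from all of }\widehat{\cN_3}\}$ is exactly weakly equivalent to $\lambda_G$.

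Second I would cut this family down from all of $\widehat{\cN_3}$ to the dense copy $\Omega=\{\chi_\omega:\omega\in\R^2\}$, and then to $\{\chi_\omega:\omega\in X\}$ for a weak $3\cD$-cross-section $X$. For the first reduction, note $\Omega$ is dense in $\widehat{\cN_3}$ (Proposition \ref{Omega_dense}); by continuity of $\omega\mapsto{\rm Ind}_{\cN_3}^G\chi_\omega$ into the Fell topology on the space of representations, $\{{\rm Ind}_{\cN_3}^G\chi_\omega:\omega\in\R^2\}$ has the same weak closure, hence the same common kernel, as the full family, so it is weakly equivalent to $\lambda_G$. For the passage to $X$: by Definition \ref{weakAD}(c), $\bigcup_{(L,\ell)\in\cD\times\Z}3^\ell LX$ is dense in $\R^2$, and the action identity $(L,\ell)\cdot\chi_\omega=\chi_{3^\ell L\omega}$ established before Proposition \ref{irr_equiv} shows ${\rm Ind}_{\cN_3}^G\chi_{3^\ell L\omega}\cong{\rm Ind}_{\cN_3}^G\chi_\omega=U^\omega$ (inducing a character and its $G$-conjugate give equivalent — here literally equal as elements of the induced picture — representations; this is part of Proposition \ref{irr_equiv} together with the Mackey machinery of \cite{KT}, since $3^\ell L\omega$ lies in the same $(\cD\times\Z)$-orbit as $\omega$). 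Therefore $\{U^\omega:\omega\in X\}$ has the same set of representations-up-to-equivalence-and-orbit as $\{{\rm Ind}_{\cN_3}^G\chi_\omega:\omega\in\bigcup 3^\ell LX\}$, whose parameter set is dense in $\R^2$; by the density/continuity argument again this last family is weakly equivalent to $\{{\rm Ind}_{\cN_3}^G\chi_\omega:\omega\in\R^2\}$, hence to $\lambda_G$. Chaining the weak equivalences gives the theorem.

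The main obstacle I expect is the \emph{continuity} input: making precise that $\omega\mapsto{\rm Ind}_{\cN_3}^G\chi_\omega$, equivalently $\chi\mapsto{\rm Ind}_{\cN_3}^G\chi$ on $\widehat{\cN_3}$, is continuous for the Fell topology on $\widehat{G}$ (or on the representation space), so that a dense set of parameters yields a weakly dense — hence common-kernel-equal — family. This is a known property of induction from closed (here open) subgroups: induction is continuous for weak containment, so if $S$ is dense in $\widehat{\cN_3}$ then $\{{\rm Ind}\,\chi:\chi\in S\}$ weakly contains $\{{\rm Ind}\,\chi:\chi\in\widehat{\cN_3}\}$, because each $\chi\in\widehat{\cN_3}$ is weakly contained in $\overline{S}=\widehat{\cN_3}$, hence (induction preserving $\prec$) ${\rm Ind}\,\chi\prec\{{\rm Ind}\,\psi:\psi\in S\}$. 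I would cite \cite{fol} Chapter 7 (and \cite{KT}) for: (i) $\lambda_G\cong{\rm Ind}_H^G\lambda_H$ for $H$ open; (ii) induction preserves weak containment; (iii) $\chi_\omega\prec\lambda_{\cN_3}$ for every $\omega$ (true since $\widehat{\cN_3}={\rm supp}(\lambda_{\cN_3})$, $\cN_3$ being abelian). A secondary, purely bookkeeping, obstacle is confirming that replacing $\omega$ by $3^\ell L\omega$ genuinely produces the \emph{same} induced representation (not merely an equivalent one in some unrelated abstract sense) — but this is exactly the orbit-invariance already packaged in Proposition \ref{irr_equiv}, so it costs nothing. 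With these citations in hand the argument is a short chain of weak equivalences and requires no new computation.
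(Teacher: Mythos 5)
Your proposal is correct and follows essentially the same route as the paper: density of $\{\chi_\omega:\omega\in\bigcup_{(L,\ell)}3^\ell LX\}$ in $\widehat{\cN_3}$ gives weak equivalence with $\lambda_{\cN_3}$, induction preserves this weak equivalence (the paper packages your induction-commutes-with-direct-integrals and continuity arguments into a single citation of Corollary 5.41 of \cite{KT}), and orbit invariance of induced representations reduces the family to $\{U^\omega:\omega\in X\}$. The only nit is your parenthetical claim that conjugate characters induce ``literally equal'' representations --- they are merely unitarily equivalent --- but equivalence is all the argument needs.
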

\begin{proof}
Recall from Proposition \ref{Omega_dense} that 
$\Omega=\{\chi_\omega:\omega\in\R^2\}$ is dense in
$\widehat{\cN_3}$. Thus, the set 
$\Omega_X=
\{\chi_\omega:\omega\in\cup_{(L,\ell)\in\cD\times\Z}3^\ell LX\}$
is dense in $\widehat{\cN_3}$. 

This implies
that the left regular representation of
$\cN_3$ is weakly equivalent to
$\Omega_X$.
Now apply Corollary 5.41 of \cite{KT} to conclude that the left regular representation of 
$\Gamma_3\rtimes_{\vartheta}\Z$ is weakly equivalent to
$\{U^\omega:\omega\in 
\cup_{(L,\ell)\in\cD\times\Z}3^\ell LX\}$.
But, for every $\omega\in 
\cup_{(L,\ell)\in\cD\times\Z}3^\ell LX$, there exists an
$\omega'\in X$ so that $U^{\omega'}\sim U^\omega$. This means that
the left regular representation of 
$\Gamma_3\rtimes_{\vartheta}\Z$ is weakly equivalent to
$\{U^\omega:\omega\in X\}$.
\end{proof}

\begin{rem}\label{fath_fam}
Since $\Gamma_3\rtimes_{\vartheta}\Z$ is amenable $($see, 
for example, \cite{pat}$)$,
the left regular representation is a faithful representation of
$C^*(\Gamma_3\rtimes_{\vartheta}\Z)$ by Hulanicki's Theorem \cite{hul}. Therefore, if $X$ is a weak $3\cD$-cross-section,
then $\{U^\omega:\omega\in X\}$ is a faithful family of
irreducible representations of 
$C^*(\Gamma_3\rtimes_{\vartheta}\Z)$.
\end{rem}

In order to get an explicit expression for
the $U^\omega$, we will need to fix a 
{\it section} from the quotient group $\cD\times\Z$
into $\Gamma_3\rtimes_{\vartheta}\Z$. 
That is, we fix a map $\gamma:\cD\times\Z\to\Gamma_3\rtimes_{\vartheta}\Z$,
satisfying $(Q\circ\gamma)(L,\ell)=(L,\ell)$ as follows:
\begin{definition}\label{crosssection}
For $(L,\ell)\in\cD\times\Z$, let
\[
\gamma (L,l)=\left\{\begin{array}{ll}([0,L],l)& L\in \mathcal D^0\\
([\frac{3^{-l}}2z,L],l)&L\notin\mathcal D^0\\
\end{array}\right.,
\]
 where $z=v$ if $\Gamma$ is either $pg$ or $pmg2$ and $z=u+v$ if $\Gamma$ is either $pgg2$ or $p4mg$.
\end{definition}
This is legitimate since $[\frac{3^{-l}}2z,L]=
[0,3\cdot{\rm id}]^{-l}[\frac12z,L][0,3\cdot{\rm id}]^l\in\Gamma_3$.  

Each member
of $\Gamma_3\rtimes_{\vartheta}\Z$ can be uniquely written in the form
$\gamma(L,\ell)\big([x,{\rm id}],0\big)$ with 
$(L,\ell)\in \cD\times\Z$ and 
$\big([x,{\rm id}],0\big)\in\cN_3$. 
Indeed, for $\big([y,M],m\big)\in\Gamma_3\rtimes_{\vartheta}\Z$,
\[
\big([y,M],m\big)]=
\gamma(M,m)\big(\gamma(M,m)^{-1}\big([y,M],m\big)\big)\quad
\text{and}\quad \gamma(M,m)^{-1}\big([y,M],m\big)\in\cN_3.
\]

Since $\Gamma_3\rtimes_{\vartheta}\Z$ is discrete,
inducing a one dimensional representation, such as
$\chi_\omega$, for $\omega\in \R^2$ from $\cN_3$ up to 
$\Gamma_3\rtimes_{\vartheta}\Z$ takes
a relatively simple form (see Section 2.1 of \cite{KT}). Let
\[
\cH_{_{U^\omega}}=\{\xi:\Gamma_3\rtimes_{\vartheta}\Z\to\C
\,\,\text{satisfying (a) and (b)}\},\,\,\text{where}
\]
\indent (a) 
$\xi\big(\gamma(L,\ell)\big([x,{\rm id}],0\big)\big)=
\chi_\omega\big([-x,{\rm id}],0\big)\xi\big(\gamma(L,\ell)\big)$, for all
$(L,\ell)\in\cD\times\Z$, $\big([x,{\rm id}],0\big)\in\cN_3$.\\
\indent (b) $\sum_{(L,\ell)\in\cD\times\Z}
|\xi\big(\gamma(L,\ell)\big)|^2<\infty$.

We equip $\cH_{_{U^\omega}}$ with the inner product given by
$\langle\xi,\eta\rangle=
\sum_{(L,\ell)\in\cD\times\Z}\xi\big(\gamma(L,\ell)\big)
\overline{\eta\big(\gamma(L,\ell)\big)}$,
for $\xi,\eta\in\cH_{_{U^\omega}}$. The induced representation,
$U^\omega$ is realized on $\cH_{_{U^\omega}}$. For 
$\big([x,L],\ell\big)\in\Gamma_3\rtimes_{\vartheta}\Z$, 
$U^\omega\big([x,L],\ell\big)$ is the unitary operator
on $\cH_{_{U^\omega}}$ defined by
\[
U^\omega\big([x,L],\ell\big)\xi\big([y,M],m\big)=
\xi\big(\big([x,L],\ell\big)^{-1}\big([y,M],m\big)\big),
\] for all $\big([y,M],m\big)\in\Gamma_3\rtimes_{\vartheta}\Z$,
$\xi\in\cH_{_{U^\omega}}$. It is often useful to work with
a representation that is unitarily equivalent to 
$U^\omega$ obtained by noticing that 
$W:\ell^2(\cD\times\Z)\to\cH_{_{U^\omega}}$ given by
$Wf\big(\gamma(L,\ell)\big([x,{\rm id}],0\big)\big)=
\chi_\omega(-x)f(L,\ell)$, for
$\gamma(L,\ell)\big([x,{\rm id}],0\big)\in\Gamma_3\rtimes_{\vartheta}\Z$, 
$f\in\ell^2(\cD\times\Z)$, is
a unitary map of $\ell^2(\cD\times\Z)$ onto $\cH_{_{U^\omega}}$. Define
\[
\sigma_\omega\big([x,L],\ell\big)=
W^{-1}U^\omega\big([x,L],\ell\big)W,\quad
\text{for all}\,\,\big([x,L],\ell\big)\in\Gamma_3\rtimes_{\vartheta}\Z.
\]
Although $\sigma_\omega$ depends on the 
section $\gamma$,
we suppress its role in the notation. We note that
$W^{-1}\xi(L,\ell)=\xi\big(\gamma(L,\ell)\big)$, for 
$(L,\ell)\in\cD\times\Z$, 
$\xi\in\cH_{_{U^\omega}}$.
\begin{prop}\label{explicit_induced}
For $\omega\in\R^2$, $\sigma_\omega$ is given by
\[
\sigma_\omega\big([x,L],\ell\big)f(M,m)=
\chi_\omega\big(\gamma(M,m)^{-1}\big([x,L],\ell\big)
\gamma(L^{-1}M,m-\ell)\big)
f(L^{-1}M,m-\ell),
\]
for $(M,m)\in\cD\times\Z$, 
$f\in\ell^2(\cD\times\Z)$, and 
$\big([x,L],\ell\big)\in\Gamma_3\rtimes_{\vartheta}\Z$.
\end{prop}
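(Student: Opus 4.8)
The plan is to unwind the definition $\sigma_\omega = W^{-1}U^\omega W$ by direct computation, carefully tracking the action of $U^\omega$ on $\mathcal H_{U^\omega}$ and translating it back to $\ell^2(\cD\times\Z)$ via the coset factorization. First I would fix $\big([x,L],\ell\big)\in\Gamma_3\rtimes_\vartheta\Z$, take $f\in\ell^2(\cD\times\Z)$ and $(M,m)\in\cD\times\Z$, and compute
\[
\sigma_\omega\big([x,L],\ell\big)f(M,m) = \big(W^{-1}U^\omega\big([x,L],\ell\big)Wf\big)(M,m) = \big(U^\omega\big([x,L],\ell\big)Wf\big)\big(\gamma(M,m)\big),
\]
using the identity $W^{-1}\xi(M,m)=\xi\big(\gamma(M,m)\big)$ recorded just before the proposition. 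By the defining formula for the induced representation, this equals $(Wf)\big(\big([x,L],\ell\big)^{-1}\gamma(M,m)\big)$.

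The crux is then to rewrite the element $\big([x,L],\ell\big)^{-1}\gamma(M,m)\in\Gamma_3\rtimes_\vartheta\Z$ in the canonical form $\gamma(L',\ell')\big([y,{\rm id}],0\big)$ with $(L',\ell')\in\cD\times\Z$ and $\big([y,{\rm id}],0\big)\in\cN_3$, so that I can apply the formula $Wf\big(\gamma(L',\ell')\big([y,{\rm id}],0\big)\big)=\chi_\omega(-y)f(L',\ell')$. Applying the homomorphism $Q$ of Proposition \ref{N_A_normal} to $\big([x,L],\ell\big)^{-1}\gamma(M,m)$ gives $(L,\ell)^{-1}(M,m)=(L^{-1}M,m-\ell)$ in $\cD\times\Z$; since $Q\circ\gamma=\mathrm{id}$ and $\ker Q=\cN_3$, the first component must be $\gamma(L^{-1}M,m-\ell)$ and we get
\[
\big([x,L],\ell\big)^{-1}\gamma(M,m) = \gamma(L^{-1}M,m-\ell)\,\big([y,{\rm id}],0\big)
\]
for a uniquely determined $\big([y,{\rm id}],0\big)=\gamma(L^{-1}M,m-\ell)^{-1}\big([x,L],\ell\big)^{-1}\gamma(M,m)\in\cN_3$. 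Substituting into the previous display yields $\chi_\omega(-y)f(L^{-1}M,m-\ell)$, and rewriting $\chi_\omega(-y)=\chi_\omega\big([-y,{\rm id}],0\big)=\chi_\omega\big(\gamma(M,m)^{-1}\big([x,L],\ell\big)\gamma(L^{-1}M,m-\ell)\big)$ (taking inverses inside $\chi_\omega$, which is legitimate since $\chi_\omega$ is a character and the bracketed element is the inverse of $[y,{\rm id}]$) gives exactly the asserted formula.

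The main obstacle is purely bookkeeping: verifying that the element $\gamma(M,m)^{-1}\big([x,L],\ell\big)\gamma(L^{-1}M,m-\ell)$ genuinely lies in $\cN_3$, i.e. that its $Q$-image is the identity — this follows from $Q\big(\gamma(M,m)\big)=(M,m)$, $Q\big([x,L],\ell\big)=(L,\ell)$, and $Q\big(\gamma(L^{-1}M,m-\ell)\big)=(L^{-1}M,m-\ell)$, whose product in $\cD\times\Z$ is $(M,m)^{-1}(L,\ell)(L^{-1}M,m-\ell)=({\rm id},0)$ — together with the uniqueness of the coset factorization noted after Definition \ref{crosssection}. One should also double-check the sign/inverse conventions in the argument of $\chi_\omega$, since the definition of $\mathcal H_{U^\omega}$ uses $\chi_\omega\big([-x,{\rm id}],0\big)$ rather than $\chi_\omega\big([x,{\rm id}],0\big)$; this is why the final formula has $\gamma(M,m)^{-1}\big([x,L],\ell\big)\gamma(L^{-1}M,m-\ell)$ appearing directly (rather than its inverse) inside $\chi_\omega$. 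No explicit computation of $y$ in terms of the section data is needed for this statement, so I would stop once the canonical-form identity is established.
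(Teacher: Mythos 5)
Your proposal is correct and follows essentially the same route as the paper: conjugate by $W$, apply the defining formula for $U^\omega$, factor $\big([x,L],\ell\big)^{-1}\gamma(M,m)$ through the section using $Q$, and evaluate $\chi_\omega$ on the resulting $\cN_3$ component with the sign convention built into $W$. Nothing is missing.
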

\begin{proof}
Fix $(M,m)\in\cD\times\Z$ and 
$\big([x,L],\ell\big)\in\Gamma_3\rtimes_{\vartheta}\Z$. Then
\[
\big([x,L],\ell\big)^{-1}\gamma(M,m)=
\gamma(L^{-1}M,m-\ell)\big(\gamma(L^{-1}M,m-\ell)^{-1}
\big([x,L],\ell\big)^{-1}\gamma(M,m)\big).
\]
Observe that $Q\big(\gamma(L^{-1}M,m-\ell)^{-1}
\big([x,L],\ell\big)^{-1}\gamma(M,m)\big)=
(L^{-1}M,m-\ell)^{-1}(L^{-1},-\ell)(M,m)=({\rm id},0)$,
since $Q$ is a homomorphism of $\Gamma_3\rtimes_{\vartheta}\Z$ onto 
$\cD\times\Z$. Thus
\[
\gamma(L^{-1}M,m-\ell)^{-1}
\big([x,L],\ell\big)^{-1}\gamma(M,m)\in\cN_3.
\]
Therefore, for any $f\in\ell^2(\cD\times\Z)$,
\begin{eqnarray*}
\sigma_\omega\big([x,L],\ell\big)f(M,m)&=&
W^{-1}U^\omega\big([x,L],\ell\big)Wf(M,m)=
U^\omega\big([x,L],\ell\big)Wf\big(\gamma(M,m)\big)\\
&=& Wf(\big([x,L],\ell\big)^{-1}\gamma(M,m)\big)\\
&=&
Wf\left(\gamma(L^{-1}M,m-\ell)\big(\gamma(L^{-1}M,m-\ell)^{-1}
\big([x,L],\ell\big)^{-1}\gamma(M,m)\big)\right)\\
&=&\chi_\omega\big(\gamma(M,m)^{-1}\big([x,L],\ell\big)
\gamma(L^{-1}M,m-\ell)\big)
Wf\left(\gamma(L^{-1}M,m-\ell)\right)\\
&=&\chi_\omega\big(\gamma(M,m)^{-1}\big([x,L],\ell\big)
\gamma(L^{-1}M,m-\ell)\big)f(L^{-1}M,m-\ell),
\end{eqnarray*}
as asserted.
\end{proof}
Using Definition \ref{crosssection}, and the fact that $\chi_\omega\big([y,{\rm id}],0\big)=e^{-2\pi i\langle y,\omega\rangle}$ for
$\big([y,{\rm id}],0\big)\in\cN_3$, the formula for $\sigma_\omega$ simplifies as follows:

\begin{cor}
\label{finalinduced}
If $\Gamma$ is a wallpaper group and $\omega\in\R^2$,
then
\[
\sigma_\omega\big([x,L],\ell\big)f(M,m)=
\left\{\begin{array}{ll}e^{-2\pi i\langle x, 3^mL^{-1}M\omega\rangle}f((L^{-1}M,m-\ell))&L\in\mathcal D^0\\
e^{-\pi i\langle z,\omega\rangle} e^{-2\pi i\langle x,3^mL^{-1}M\omega\rangle}f((L^{-1}M,m-\ell))&L\notin\mathcal D^0,M\in\mathcal D^0\\
e^{\pi i\langle z,\omega\rangle} e^{-2\pi i\langle x,3^mL^{-1}M\omega\rangle-\frac12z}f((L^{-1}M,m-\ell))&L\notin\mathcal D^0, M\notin \mathcal D^0\\
 \end{array}\right.,
\]
for $(M,m)\in\cD\times\Z$, 
$f\in\ell^2(\cD\times\Z)$, and 
$\big([x,L],\ell\big)\in\Gamma_3\rtimes_{\vartheta}\Z$.
\end{cor}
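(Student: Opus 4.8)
The plan is to substitute the explicit section $\gamma$ of Definition~\ref{crosssection} into the formula of Proposition~\ref{explicit_induced} and to evaluate the group element
\[
g=\gamma(M,m)^{-1}\big([x,L],\ell\big)\gamma(L^{-1}M,m-\ell)
\]
directly, using the semidirect-product multiplication \eqref{group_product} and the inversion formula $\big([a,P],p\big)^{-1}=\big([-3^{p}Pa,P^{-1}],-p\big)$. By the computation in the proof of Proposition~\ref{explicit_induced}, $g$ lies in $\cN_3$, hence $g=\big([w,{\rm id}],0\big)$ for a single vector $w\in\R^2$; then $\chi_\omega(g)=e^{-2\pi i\langle w,\omega\rangle}$, and the whole task reduces to identifying $w$ as an affine function of $x$ in each case and rewriting $\langle 3^{m}M^{-1}Lx,\omega\rangle=\langle x,3^{m}L^{-1}M\omega\rangle$, which is legitimate because $\cD\subseteq\cO_2$ gives $(M^{-1}L)^{t}=L^{-1}M$.

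The case split is dictated by the cosets of $\cD^0$ in $\cD$. Since $\cD^0$ has index $1$ or $2$ in $\cD$ (index $1$ precisely when $\Gamma$ is symmorphic), $L^{-1}M\in\cD^0$ iff $L$ and $M$ lie in the same $\cD^0$-coset; so $\gamma(M,m)$ carries a $\tfrac{3^{-m}}{2}z$-term exactly when $M\notin\cD^0$, and $\gamma(L^{-1}M,m-\ell)$ carries a $\tfrac{3^{\ell-m}}{2}z$-term exactly when $L$ and $M$ lie in different cosets. First I would do the case $L\in\cD^0$: here $\gamma(M,m)$ and $\gamma(L^{-1}M,m-\ell)$ either both have no $z$-term (when $M\in\cD^0$) or each has one (when $M\notin\cD^0$), and in the latter subcase the two fractional $z$-translations, after being pushed through the linear parts $M^{-1}L$, $(L^{-1}M)^{-1}$ and the dilation twists $3^{\pm(\ell-m)}$, \emph{cancel}; either way $w=3^{m}M^{-1}Lx$, which gives the first line of the Corollary. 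Next, for $L\notin\cD^0$ and $M\in\cD^0$ (so $L^{-1}M\notin\cD^0$), only the $z$-term of $\gamma(L^{-1}M,m-\ell)$ survives and contributes $+\tfrac12z$, so $w=3^{m}M^{-1}Lx+\tfrac12z$ and one picks up the factor $e^{-\pi i\langle z,\omega\rangle}$. Finally, for $L\notin\cD^0$ and $M\notin\cD^0$ (so $L^{-1}M\in\cD^0$), only the $z$-term of $\gamma(M,m)^{-1}$ survives and, after transport, contributes $-\tfrac12z$, so $w=3^{m}M^{-1}Lx-\tfrac12z$ and the factor is $e^{\pi i\langle z,\omega\rangle}$. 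Substituting $\chi_\omega\big([w,{\rm id}],0\big)=e^{-2\pi i\langle w,\omega\rangle}$ and expanding $\langle w,\omega\rangle$ yields the three displayed formulas; for symmorphic $\Gamma$ one has $\cD^0=\cD$, so only the first line arises.

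The computations are mechanical, and I do not expect a genuine obstacle: this Corollary is a direct specialization of Proposition~\ref{explicit_induced}. The only point requiring care is the bookkeeping of the dilation twists $3^{\pm\cdot}$ introduced by multiplication in $\Gamma_3\rtimes_{\vartheta}\Z$, together with the observation that when both fractional $z$-translations are present they exactly cancel rather than merely combine --- this is what makes the first line of the Corollary independent of whether $M\in\cD^0$. To keep the algebra transparent I would first apply Lemma~\ref{factoring} to push all $\Z$-components to the left, reducing each product to a computation inside $\Gamma_3\subseteq{\rm Aff}(\R^2)$ where the untwisted rule $[a,P][b,Q]=[Q^{-1}a+b,PQ]$ applies, and let the $3^{m}$ prefactor on $x$ enter only through the single relation coming from $\gamma(M,m)^{-1}\big([x,L],0\big)$.
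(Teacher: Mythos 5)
Your proposal is correct and is exactly the route the paper intends: the paper offers no written proof beyond the remark that the formula follows by substituting the section of Definition~\ref{crosssection} into Proposition~\ref{explicit_induced}, and your case-by-case evaluation of $\gamma(M,m)^{-1}\big([x,L],\ell\big)\gamma(L^{-1}M,m-\ell)$ (including the cancellation of the two $\tfrac12 z$-terms when $L\in\cD^0$, $M\notin\cD^0$, and the signs $+\tfrac12z$, $-\tfrac12z$ in the remaining two cases) checks out and reproduces all three lines. The only discrepancy is the stray ``$-\tfrac12 z$'' inside the exponent of the third line of the stated Corollary, which is a typo in the paper; your version $e^{\pi i\langle z,\omega\rangle}e^{-2\pi i\langle x,3^mL^{-1}M\omega\rangle}$ is the correct one.
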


Now, suppose that $X$ is a weak $3\cD$-cross-section. Then
$\cup_{(L,\ell)\in\cD\times\Z}3^\ell LX$ is dense in $\R^2$.
Moreover, distinct points in $X$ lie in distinct $\cD\times\Z$ orbits and the orbit of each point in $X$ is free.
In light of
Proposition \ref{irr_equiv}, $\{\sigma_\omega:\omega\in X\}$
consists of inequivalent irreducible representations of
$\Gamma_3\rtimes_{\vartheta}\Z$ and Theorem \ref{weakeq} says there are
enough of them to be weakly equivalent to the left regular
representation. We now form the direct integral of the 
$\sigma_\omega$ with respect to Lebesgue measure of $\R^2$ 
restricted to $X$. For a comprehensive treatment of the
the theory of direct integrals see
\cite{ni} or Chapter 14 of \cite{KR}. Also, Section 7.4 of \cite{fol} provides details
for direct integrals of unitary representations.

Each $\sigma_\omega$, for $\omega\in X$, acts on
the same Hilbert space, $\ell^2(\cD\times\Z)$. This makes it
relatively easy to describe the Hilbert space of their
direct integral with respect to Lebesgue measure restricted to
$X$. First, we note that a function
$F:X\to\ell^2(\cD\times\Z)$ is called \emph{measurable}, in
this context, if
$\omega\to\langle F(\omega),\eta\rangle$ is Borel measurable on
$X$, for each $\eta\in\ell^2(\cD\times\Z)$. Then
\[
L^2\big(X,\ell^2(\cD\times\Z)\big)=
\left\{F:X\to\ell^2(\cD\times\Z)\,\big|\, F\,\,\text{is measurable
and } \int_X\|F(\omega)\|^2d\omega<\infty\right\}
\]
is the Hilbert space we need.
By definition, the direct integral representation, denoted
$\int^{\oplus}_X\sigma_\omega\,d\omega$, acts 
 as follows: For
$\big([x,L],\ell\big)\in\Gamma_3\rtimes_{\vartheta}\Z$ and $F\in 
L^2\big(X,\ell^2(\cD\times\Z)\big)$,
\begin{equation}\label{dir_int}
\left[\left(\int^{\oplus}_X\sigma_\omega\,d\omega\right)
\big([x,L],\ell\big)F\right](\omega')=
\sigma_{\omega'}\big([x,L],\ell\big)F(\omega'),\quad\text{for a.e.}
\,\,\omega'\in X.
\end{equation}

%%%%%%%%%%%%%%%%%%%%%%%%%

\section{A decomposition of the wavelet representation}
\label{maintheorem}
To decompose the wavelet representation in terms of the family of irreducibles developed in the previous section, we need to conjugate $\widehat V$ by a map that breaks $\mathbb R^2$ up into a product $X\times\cD\times\mathbb Z$, where $X$ is a weak $3\cD-$cross-section.  Because of the extra factor in the induced representations for group elements involving a glide, this conjugating map will include the twist $c:X\times \mathcal D\mapsto \mathbb C$, defined by 
\begin{equation}
\label{twist}
c(\omega,L)=\left\{\begin{array}{ll}e^{-\frac{\pi i\langle z,\omega\rangle}2}&\mbox{ if } L\in \mathcal D^0\\
e^{\frac{\pi i\langle z,\omega\rangle}2}&\mbox{ if } L\notin \mathcal D^0\\
\end{array}\right.,
\end{equation}
where $z$ is the vector identified
in Lemma \ref{T_gamma_lemma}.

Given a weak $3\cD-$cross-section $X$, define the map $\rho:L^2(\mathbb R^2)\mapsto L^2(X\times \cD\times\mathbb Z)$
by
\[
[\rho(\phi)](\omega,M,j) = 3^jc(\omega,M)\phi(3^jM(\omega)),
\]
for $\omega\in X$, $j\in\mathbb Z$, and $M\in\mathcal D$.   Then $\rho$ is a Hilbert space isomorphism whose inverse is given by
\begin{equation}
\label{rhoinv}
[\rho^{-1}(f)](\xi)
= \sum_k 3^{-k}\sum_{M'\in\mathcal D}c(-3^{-k}M'^{-1}\xi,M'){\bf 1}_{(M'(X))}(3^{-k}\xi) [f(3^{-k}M'^{-1}\xi,M',k)],
\end{equation}
for $\xi\in\R^2$ and $f\in L^2(X\times \cD\times\mathbb Z)$.
If $\widetilde V$ is defined by $\widetilde V
\big([x,L],\ell\big)=\rho\,\widehat V\big([x,L],\ell\big) \rho^{-1},$ for all $\big([x,L],\ell\big)\in \Gamma_3\rtimes_{\vartheta}\Z$,
we get a representation whose detailed action is given in
the following proposition.
\begin{prop}\label{tilde_V}
For $\big([x,L],\ell\big)\in \Gamma_3\rtimes_{\vartheta}\Z$ and
$f\in L^2(X\times \cD\times\mathbb Z)$,
\begin{align*} 
[\widetilde V&\big([x,L],\ell\big)(f)](\omega,M,j)\\
&=\left\{\begin{array}{ll}e^{-2\pi i\langle x,3^jL^{-1}M(\omega)\rangle}f(\omega,L^{-1}M,j-\ell)&L\in\mathcal D^0\\
e^{-\pi i\langle z,\omega\rangle}e^{-2\pi i\langle x,3^jL^{-1}M(\omega)\rangle}f(\omega,L^{-1}M,j-\ell)]& L\notin\mathcal D^0,\; M\in\cD^0\\
e^{\pi i\langle z,\omega\rangle}e^{-2\pi i\langle x,3^jL^{-1}M(\omega)\rangle}f(\omega,L^{-1}M,j-\ell)& L\notin\mathcal D^0,\; M\notin\cD^0\\
\end{array}\right.,
\end{align*}
for a.e. $(\omega,M,j)\in X\times\cD\times\Z$.
\end{prop}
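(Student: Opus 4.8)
The plan is to verify the formula for $\widetilde V$ by direct computation, tracing through the definition $\widetilde V\big([x,L],\ell\big) = \rho\,\widehat V\big([x,L],\ell\big)\,\rho^{-1}$ and using the explicit formulas for $\rho$, $\rho^{-1}$, and $\widehat V$ already established (Proposition \ref{hat_rho} and equation \eqref{rhoinv}). First I would apply $\rho^{-1}$ to an arbitrary $f \in L^2(X\times\cD\times\Z)$ to obtain a function $\phi = \rho^{-1}(f)$ on $\R^2$, then apply $\widehat V\big([x,L],\ell\big)$ using the formula
\[
\widehat V\big([x,L],\ell\big)\phi(\xi) = 3^{-\ell}e^{-2\pi i\langle x,L^{-1}\xi\rangle}\phi(3^{-\ell}L^{-1}\xi),
\]
and finally apply $\rho$, i.e. evaluate $3^j c(\omega,M)\,\big[\widehat V\big([x,L],\ell\big)\phi\big](3^jM(\omega))$ at a point $(\omega,M,j) \in X\times\cD\times\Z$. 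The bulk of the argument is bookkeeping: after substituting $\xi = 3^jM(\omega)$ into the sum \eqref{rhoinv}, the indicator ${\bf 1}_{M'(X)}(3^{-k}\xi) = {\bf 1}_{M'(X)}(3^{j-k}M(\omega))$ forces $M' = M$ and $k = j$ (using that distinct points of $X$ lie in distinct $\cD\times\Z$-orbits, which are free — this is condition (b) of Definition \ref{weakAD} together with $\cD_\omega = \{{\rm id}\}$ on $X$), collapsing the double sum to the single surviving term. One then substitutes $3^{-k}M'^{-1}\xi = \omega$ back in, and the factors $3^{-k}$, $3^{-\ell}$, $3^j$ combine to $3^{j-\ell}\cdot 3^{-\ell}$... more carefully, the powers of $3$ telescope so that the resulting function is evaluated at index $j - \ell$, matching $f(\omega, L^{-1}M, j-\ell)$.

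After the index $M' = M$, $k = j$ is pinned down, the exponential from $\widehat V$ becomes $e^{-2\pi i\langle x, L^{-1}\cdot 3^jM(\omega)\rangle} = e^{-2\pi i\langle x, 3^jL^{-1}M(\omega)\rangle}$, which is exactly the oscillatory factor appearing in all three cases of the claimed formula. The remaining work is to chase the twist factors $c(\omega,M)$, $c(-3^{-k}M'^{-1}\xi, M')$, and — crucially — to track how $L^{-1}M$ enters: one must determine whether $L^{-1}M \in \cD^0$, which is governed by whether $L$ and $M$ lie in $\cD^0$ (since $\cD^0$ is a subgroup of index $1$ or $2$ in $\cD$, with $\cD^0 = \cD$ in the symmorphic case). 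The new argument slot for $\rho$ after applying $\widehat V$ is $(\omega, L^{-1}M, j-\ell)$, so the twist factor produced by the outer $\rho$ is $c(\omega, M)$ and the twist factor sitting inside $\rho^{-1}(f)$ at the surviving term is $c(-3^{-j}M^{-1}\cdot 3^jM(\omega), M) = c(\omega, M)$ as well, wait — no: $\rho^{-1}(f)$ is built from $f$, not from $\widehat V \phi$, so I need to be careful about the composition order. The correct bookkeeping: $\rho^{-1}(f)$ carries a factor $c(\omega, M)$; pushing this through $\widehat V$ (which doesn't touch it beyond evaluation) and then multiplying by the outer $c(\omega, M)$ from $\rho$ gives $c(\omega,M)^2$, whose value is $e^{\mp\pi i\langle z,\omega\rangle}$ depending on whether $M \in \cD^0$ or not — and that is precisely the factor $e^{\mp\pi i\langle z,\omega\rangle}$ distinguishing the three cases.

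The main obstacle I anticipate is getting the twist algebra exactly right: there are three $c$-factors in play (one from applying $\rho$, one sitting in the definition of $\rho^{-1}(f)$, and the dependence on which coset $L^{-1}M$ lands in), and the sign conventions in \eqref{twist} must be combined so that the net phase matches $e^{-\pi i\langle z,\omega\rangle}$ when $L\notin\cD^0, M\in\cD^0$ and $e^{\pi i\langle z,\omega\rangle}$ when $L\notin\cD^0, M\notin\cD^0$, with no phase when $L\in\cD^0$. The key identity making this work is that $c(\omega,M)\overline{c(\omega,L^{-1}M)}$ equals $1$ when $L\in\cD^0$ and equals $e^{\mp\pi i\langle z,\omega\rangle}$ otherwise — this is where the structure "$\cD^0$ is an index-two subgroup" is used, via the case analysis on memberships of $L$ and $M$ in $\cD^0$. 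Once this phase identity is in hand, comparison with Corollary \ref{finalinduced} confirms that $\widetilde V$ restricted to each fiber $\{\omega\}\times\cD\times\Z$ is nothing but $\sigma_\omega$, which is exactly the statement being proved (and sets up the direct-integral decomposition $\widetilde V = \int_X^\oplus \sigma_\omega\,d\omega$ in the sequel). Everything else — measurability of $F \mapsto \langle F(\omega), \eta\rangle$, the fact that $\rho$ is unitary as already asserted — is routine and can be cited rather than reproved.
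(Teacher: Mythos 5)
Your overall route is the same as the paper's: write $\widetilde V=\rho\,\widehat V\,\rho^{-1}$, use the indicator in \eqref{rhoinv} (via condition (b) of Definition \ref{weakAD}) to collapse the double sum to the single term $k=j-\ell$, $M'=L^{-1}M$, check that the powers of $3$ cancel and that the oscillatory factor from Proposition \ref{hat_rho} becomes $e^{-2\pi i\langle x,3^jL^{-1}M\omega\rangle}$, and then combine the two twist factors. The identity you state in your last paragraph --- that $c(\omega,M)\,\overline{c(\omega,L^{-1}M)}$ equals $1$ when $L\in\cD^0$ and equals $e^{-\pi i\langle z,\omega\rangle}$ or $e^{\pi i\langle z,\omega\rangle}$ according as $M\in\cD^0$ or not when $L\notin\cD^0$ --- is exactly the computation the paper performs (its inner factor is $c(-\omega,L^{-1}M)=\overline{c(\omega,L^{-1}M)}$), and it does deliver the stated formula.

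However, the ``correct bookkeeping'' you settle on in your second paragraph is wrong and contradicts that identity. The twist factor carried by the surviving term of $\rho^{-1}(f)$ sits in the slot $M'=L^{-1}M$ with argument $-3^{-k}M'^{-1}\xi=-\omega$, so it is $c(-\omega,L^{-1}M)$, \emph{not} $c(\omega,M)$; the product is therefore $c(\omega,M)\,c(-\omega,L^{-1}M)=c(\omega,M)\,\overline{c(\omega,L^{-1}M)}$ and not $c(\omega,M)^2$. The latter depends only on $M$ and would produce a nontrivial phase $e^{\mp\pi i\langle z,\omega\rangle}$ even for $L={\rm id}$, contradicting the first line of the proposition (and the fact that $\widetilde V$ restricted to $\cN_3$ must act by the characters $\chi_{3^jM\omega}$ with no extra phase). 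Delete that sentence and keep only the identity from your final paragraph; the case analysis there uses correctly that $M$ and $L^{-1}M$ lie in the same coset of $\cD^0$ exactly when $L\in\cD^0$. With that single correction the proof is complete and coincides with the paper's.
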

\begin{proof} We simply compute:
\begin{align*}
[\widetilde V&\big([x,L],\ell\big)(f)](\omega,M,j)\\
&= 3^{j}c(\omega,M)[\widehat V([x,L],\ell)(\rho^{-1}(f))](3^jM(\omega))\\
&= 3^{(j-\ell)}c(\omega,M)e^{-2\pi i\langle x,3^jL^{-1}M(\omega)\rangle}[\rho^{-1}(f)](3^{j-\ell}L^{-1}M(\omega))\\
&=3^{(j-\ell)}c(\omega,M)e^{-2\pi i\langle x,3^jL^{-1}M(\omega)\rangle}3^{-(j-\ell)}c(-\omega,L^{-1}M))f(\omega,L^{-1}M,j-\ell)\\
&=\left\{\begin{array}{ll}e^{-2\pi i\langle x,3^jL^{-1}M(\omega)\rangle}f(\omega,L^{-1}M,j-\ell)&L\in\mathcal D^0\\
e^{-\pi i\langle z,\omega\rangle}e^{-2\pi i\langle x,3^jL^{-1}M(\omega)\rangle}f(\omega,L^{-1}M,j-\ell) & L\notin\mathcal D^0,\; M\in\cD^0\\
e^{\pi i\langle z,\omega\rangle}e^{-2\pi i\langle x,3^jL^{-1}M(\omega)\rangle}f(\omega,L^{-1}M,j-\ell) & L\notin\mathcal D^0,\; M\notin\cD^0\\
 \end{array}\right..\\
\end{align*}
Here the second to last step follows from Equation (\ref{rhoinv}) since the only nonzero summand of $\rho^{-1}(f)(3^{j-l}L^{-1}M(\omega))$ there is for $k=j-\ell$ and $M'=L^{-1}M$. The last step fills in the definition of $c$ from Equation(\ref{twist}).
\end{proof}
Now we are ready for the main theorem of the paper.

\begin{thm}\label{final}
Let $\Gamma$ be a wallpaper group. The $3\Gamma$-wavelet
representation $V$ of $\Gamma_3\rtimes_{\vartheta}\Z$ is
equivalent to a direct integral of irreducible representations
induced from characters of the normal abelian subgroup $\cN_3$.
\end{thm}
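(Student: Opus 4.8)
The plan is to show that the representation $\widetilde V$ on $L^2(X\times\cD\times\Z)$ constructed in Proposition \ref{tilde_V} is precisely the direct integral $\int_X^{\oplus}\sigma_\omega\,d\omega$ of the irreducible induced representations $\sigma_\omega$ described in Corollary \ref{finalinduced}. Since $V$ is unitarily equivalent to $\widehat V$ via the Fourier transform, and $\widehat V$ is unitarily equivalent to $\widetilde V$ via the Hilbert space isomorphism $\rho$, it suffices to produce a unitary equivalence between $\widetilde V$ and $\int_X^{\oplus}\sigma_\omega\,d\omega$; composing the three equivalences then gives the theorem, with the irreducibility of each $\sigma_\omega$ (for $\omega\in X$) and their pairwise inequivalence already guaranteed by Proposition \ref{irr_equiv} together with the defining properties of a weak $3\cD$-cross-section.

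First I would identify the Hilbert spaces. The space $L^2(X,\ell^2(\cD\times\Z))$ carrying $\int_X^{\oplus}\sigma_\omega\,d\omega$ is canonically isometrically isomorphic to $L^2(X\times\cD\times\Z)$ (with counting measure on $\cD\times\Z$ and Lebesgue measure on $X$) via $F\mapsto\big((\omega,M,j)\mapsto F(\omega)(M,j)\big)$; this is the standard identification of an $L^2$ space of $\ell^2$-valued functions with an $L^2$ space on a product, and I would state it as such. Under this identification, the action of $\int_X^{\oplus}\sigma_\omega\,d\omega$ on $f\in L^2(X\times\cD\times\Z)$ is, by \eqref{dir_int}, exactly $(\omega,M,j)\mapsto\big(\sigma_\omega\big([x,L],\ell\big)f(\omega,\cdot,\cdot)\big)(M,j)$, and plugging in the explicit formula from Corollary \ref{finalinduced} (with the dummy pair there written $(M,m)=(M,j)$) yields precisely the three-case formula for $[\widetilde V([x,L],\ell)(f)](\omega,M,j)$ displayed in Proposition \ref{tilde_V}. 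So the core of the proof is the observation that these two formulas coincide term by term; this is a direct comparison, not a computation, once the Hilbert-space identification is in place.

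The remaining step is to assemble the unitary equivalences. I would write $U_1=\cF$ (the Fourier transform, giving $V\simeq\widehat V$ by the definition of $\widehat V$), $U_2=\rho$ (giving $\widehat V\simeq\widetilde V$ by the definition of $\widetilde V$ and the fact, asserted after \eqref{rhoinv}, that $\rho$ is a Hilbert space isomorphism), and $U_3$ the canonical identification $L^2(X,\ell^2(\cD\times\Z))\cong L^2(X\times\cD\times\Z)$ just described; then $U_3^{-1}U_2U_1$ intertwines $V$ with $\int_X^{\oplus}\sigma_\omega\,d\omega$. Finally I would invoke Proposition \ref{irr_equiv}: since $X$ is a weak $3\cD$-cross-section, each $\omega\in X$ has trivial stability subgroup $\cD_\omega$ (property of the $Z$ in the construction in Proposition \ref{X}, from which $X$ is drawn — or more simply, since the $3^\ell LX$ are pairwise disjoint, no nontrivial $L\in\cD$ can fix any $\omega\in X$), so every $\sigma_\omega$ with $\omega\in X$ is irreducible; and each $\sigma_\omega$ is induced from the character $\chi_\omega$ of $\cN_3$ by construction. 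This exhibits $V$ as a direct integral of irreducibles induced from characters of $\cN_3$, which is the assertion.

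\textbf{Main obstacle.} The only real subtlety is bookkeeping: making sure the twist $c$ and the factors $3^j$, $3^{-k}$ in the definitions of $\rho$ and $\rho^{-1}$ conspire so that the phase factors $e^{\mp\pi i\langle z,\omega\rangle}$ emerging in Proposition \ref{tilde_V} match exactly the glide-reflection factors $e^{\mp\pi i\langle z,\omega\rangle}$ coming out of the section $\gamma$ in Corollary \ref{finalinduced} — in particular that the case split ``$L\in\cD^0$ vs.\ $L\notin\cD^0$'' in the induced representation aligns with the case split built into $c$, and that the apparently four-way split ($L,M$ each in or out of $\cD^0$) collapses to the stated three cases because $c(\omega,M)c(-\omega,L^{-1}M)$ depends only on whether $L\in\cD^0$ (since $M$ and $L^{-1}M$ lie in the same $\cD^0$-coset exactly when $L\in\cD^0$). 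This has essentially been checked inside the proof of Proposition \ref{tilde_V}; here one simply records that the output of that computation is, case for case, the formula of Corollary \ref{finalinduced}, so no further work is needed beyond citing the two results and the three unitaries.
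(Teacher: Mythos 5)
Your proposal is correct and follows essentially the same route as the paper: the paper's proof also reduces to showing $\widetilde V\simeq\int_X^{\oplus}\sigma_\omega\,d\omega$ via the canonical unitary $W:L^2\big(X,\ell^2(\cD\times\Z)\big)\to L^2(X\times\cD\times\Z)$, $WF(\omega,L,\ell)=\big(F(\omega)\big)(L,\ell)$, and then observes that the formulas of Corollary \ref{finalinduced} and Proposition \ref{tilde_V} coincide case by case. Your additional remarks on irreducibility (triviality of $\cD_\omega$ for $\omega\in X$ from condition (b) of Definition \ref{weakAD}) are also correct and consistent with the paper's earlier discussion.
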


\begin{proof}
Since $\widetilde V$ is equivalent to $V$, 
it suffices to show that the representation $\widetilde V$ is equivalent to  
$\int^{\oplus}_X\sigma_\omega\,d\omega$.  We note that the map
$W:L^2\big(X,\ell^2(\cD\times\Z)\big)\to L^2(X\times\cD\times\Z)$
given by $WF(\omega,L,\ell)=\big(F(\omega)\big)(L,\ell)$, for
all $(\omega,L,\ell)\in X\times\cD\times\Z$ and $F\in 
L^2\big(X,\ell^2(\cD\times\Z)\big)$, is a Hilbert space isomorphism,
which is easily checked.  Using \eqref{dir_int} and the explicit
formulas in Corollary \ref{finalinduced} and 
Proposition \ref{tilde_V}, one verifies directly that
\[
W\left[\left(\int^{\oplus}_X\sigma_\omega d\omega\right)
\big([x,L],\ell\big)\right]W^{-1}=\widetilde{V}\big([x,L],\ell\big),
\]
for all $\big([x,L],\ell\big)\in\Gamma_3\rtimes_{\vartheta}\Z$.  
This completes the proof.
\end{proof}

We refer to Chapter 6 of \cite{KR} for the definition of 
Type I von Neumann algebras. 
If $G$ is a locally compact group 
and $\pi$ is a unitary representation of $G$ on a Hilbert
space $\cH_\pi$, then the von Neumann algebra generated by $\pi$
is $\pi(G)''$, the double commutant of $\pi(G)$ inside
 $\cB(\cH_\pi)$. If 
$\pi(G)''$ is a Type I von Neumann algebra, then $\pi$ is called
a \emph{Type I representation}.

\begin{prop}\label{type_I}
Let $\Gamma$ be a wallpaper group. Then
the $3\Gamma$-wavelet representation
$V$ of $\Gamma_3\rtimes_{\vartheta}\Z$ is a Type I representation.
\end{prop}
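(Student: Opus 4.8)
The plan is to deduce this from Proposition~\ref{abelian_comm} together with the standard fact that a von Neumann algebra and its commutant are of the same type. First I would recall from Proposition~\ref{onto_rep} that $V(\Gamma_3\rtimes_{\vartheta}\Z)=\cG(A,\Gamma)$ with $A=3\cdot{\rm id}$, so the von Neumann algebra generated by the representation $V$ is $V(\Gamma_3\rtimes_{\vartheta}\Z)''=\cG(A,\Gamma)''$, and its commutant is $\cG(A,\Gamma)'$. By Proposition~\ref{abelian_comm}, $\cG(A,\Gamma)'$ is abelian, hence of Type~I. Since a von Neumann algebra is of Type~I exactly when its commutant is (part of the type decomposition theory; see, e.g., Chapter~6 of \cite{KR}), it follows that $\cG(A,\Gamma)''=V(\Gamma_3\rtimes_{\vartheta}\Z)''$ is of Type~I, i.e.\ $V$ is a Type~I representation.

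I expect no real obstacle here: the substantive input has already been isolated as Proposition~\ref{abelian_comm}, and the passage from ``abelian commutant'' to ``Type~I'' is purely textbook. The one point needing care is simply to cite the correct statement --- that Type~I is preserved under commutation --- which is part of the material the paper already attributes to Chapter~6 of \cite{KR}.

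If one prefers an argument that avoids invoking symmetry of type under commutation, the conclusion can instead be read off Theorem~\ref{final}: there $V$ is shown equivalent to $\int^{\oplus}_X\sigma_\omega\,d\omega$, a direct integral over the standard Borel space $X$ (with Lebesgue measure) of the representations $\sigma_\omega$, which by Proposition~\ref{irr_equiv} are \emph{pairwise inequivalent} irreducibles. Because distinct points of $X$ give inequivalent irreducible fibres, this direct integral generates the algebra of all decomposable operators, namely $\int^{\oplus}_X\cB(\ell^2(\cD\times\Z))\,d\omega$, whose commutant is the diagonal algebra $L^\infty(X)$; the former is a direct integral of Type~I algebras and therefore of Type~I, so $V$ is Type~I (and, as a byproduct, one recovers Proposition~\ref{abelian_comm} in this setting, since $L^\infty(X)$ is abelian). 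Either route completes the proof.
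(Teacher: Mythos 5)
Your main argument is exactly the paper's proof: it combines Proposition~\ref{onto_rep} and Proposition~\ref{abelian_comm} with the fact that a von Neumann algebra is Type~I if and only if its commutant is (the paper cites Theorem~9.1.3 of \cite{KR} for this step). The alternative route you sketch via the direct integral decomposition is a reasonable bonus, but the primary proof is correct and essentially identical to the one in the paper.
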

\begin{proof}
By Proposition \ref{onto_rep}, 
$V\big(\Gamma_3\rtimes_{\vartheta}\Z\big)=\cG(A,\Gamma)$, where
$A=3\cdot{\rm id}$. By Proposition \ref{abelian_comm},
$V\big(\Gamma_3\rtimes_{\vartheta}\Z\big)'$ is abelian and, thus,
a Type I von Neumann algebra. By Theorem 9.1.3 of \cite{KR}, its
commutant,
$V\big(\Gamma_3\rtimes_{\vartheta}\Z\big)''$, is also Type I. That is, $V$ is a Type I 
representation.
\end{proof} 

The $3\Gamma$-wavelet representation
$V$ is, in some sense, a natural representation of $\Gamma_3\rtimes_{\vartheta}\Z$. Another
natural representation is the left regular 
representation. Our final proposition
concerns the relationship between these two
representations.

\begin{prop}
Let $\Gamma$ be a wallpaper group. Then
the $3\Gamma$-wavelet representation
$V$ and the left regular representation of
 $\Gamma_3\rtimes_{\vartheta}\Z$ are weakly equivalent
 but not equivalent.
\end{prop}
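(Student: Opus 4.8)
The plan is to prove the two assertions separately: weak equivalence, and non-equivalence.

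For \emph{weak equivalence}, the essential work is already done. By Theorem \ref{final}, $V$ is equivalent to $\int^{\oplus}_X\sigma_\omega\,d\omega$ for a weak $3\cD$-cross-section $X$, and by the construction in Section 4 (specifically the passage from $U^\omega$ to $\sigma_\omega$), each $\sigma_\omega$ is equivalent to $U^\omega$. A direct integral of a family of representations has the same kernel in $C^*(\Gamma_3\rtimes_{\vartheta}\Z)$ as the set-theoretic family itself: indeed $\ker\big(\int^{\oplus}_X\sigma_\omega\,d\omega\big)=\bigcap_{\omega\in X}\ker(\sigma_\omega)$, since an element of $C^*$ acts as zero in the direct integral iff it acts as zero in $\sigma_\omega$ for a.e.\ $\omega$, and by continuity of $\omega\mapsto\sigma_\omega$ (pointwise on $\R^2$, hence on the dense copy $\Omega$) the a.e.\ condition upgrades to all of $X$. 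Therefore $V$ and $\{U^\omega:\omega\in X\}$ have the same kernel. By Theorem \ref{weakeq}, $\{U^\omega:\omega\in X\}$ is weakly equivalent to the left regular representation $\lambda$ of $\Gamma_3\rtimes_{\vartheta}\Z$, i.e.\ $\bigcap_{\omega\in X}\ker(U^\omega)=\ker(\lambda)$. Combining, $\ker(V)=\ker(\lambda)$, which is exactly weak equivalence of $V$ and $\lambda$.

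For \emph{non-equivalence}, I would argue by comparing commutants (equivalently, the generated von Neumann algebras). By Proposition \ref{abelian_comm}, $V(\Gamma_3\rtimes_{\vartheta}\Z)'$ is abelian, so the von Neumann algebra $V(\Gamma_3\rtimes_{\vartheta}\Z)''$ generated by $V$ is a \emph{maximal abelian}—no, rather: its commutant is abelian, so $V''$ contains its own commutant, meaning $V''$ is a von Neumann algebra whose center equals its commutant; in any case $V$ has abelian commutant. On the other hand, the commutant $\lambda(G)'$ of the left regular representation of $G=\Gamma_3\rtimes_{\vartheta}\Z$ is the right von Neumann algebra $R(G)$, which is abelian if and only if $G$ is abelian. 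Since $\cD$ is nontrivial for most wallpaper groups, and in all cases the $\Z$-action $\vartheta$ is nontrivial (conjugation by $[0,3\cdot\mathrm{id}]$ genuinely contracts $\cN_3$), $G$ is non-abelian, so $\lambda(G)'$ is non-abelian. A unitary equivalence of representations carries commutant to commutant, so $V\sim\lambda$ would force $V(G)'\cong\lambda(G)'$ as von Neumann algebras, contradicting that one is abelian and the other is not. Hence $V\not\sim\lambda$.

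The main obstacle is the rigorous treatment of the direct-integral kernel identity and making sure the weak-equivalence chain is airtight; the non-equivalence part is comparatively soft once one observes $G$ is non-abelian. One subtlety worth addressing explicitly: $\Gamma_3\rtimes_{\vartheta}\Z$ is non-abelian for \emph{every} wallpaper group because $\vartheta$ is a nontrivial automorphism action of $\Z$ (the elements $\big([x,\mathrm{id}],0\big)$ and $\big([0,\mathrm{id}],1\big)$ do not commute whenever $x\neq 0$), so the argument does not require case analysis on $\cD$. I should also note that weak equivalence together with non-equivalence is consistent precisely because $\Gamma_3\rtimes_{\vartheta}\Z$ is not a Type I group with respect to $\lambda$ in the needed sense—here the point is simply that weak equivalence only constrains kernels in $C^*(G)$, not the multiplicity-theoretic structure—so nothing further is needed to reconcile the two halves.
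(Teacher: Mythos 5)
Your proof is correct, but it takes a genuinely different route from the paper on both halves. For non-equivalence, the paper first establishes that $V$ is a Type I representation (via the abelian commutant of $\cG(A,\Gamma)$ and the double-commutant theorem) and then invokes Kaniuth's strengthening of Thoma's theorem: since $\Gamma_3\rtimes_{\vartheta}\Z$ has no abelian subgroup of finite index, its left regular representation is not Type I, so the two cannot be equivalent. Your argument short-circuits this: $V(G)'$ is abelian by Proposition \ref{abelian_comm}, while $\lambda(G)'=R(G)$ is non-abelian because $G$ is non-abelian and the right regular representation of a discrete group is faithful; since unitary equivalence carries commutant to commutant spatially, this is already a contradiction. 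This is more elementary --- it needs only the commutation theorem for the group von Neumann algebra rather than any Type I classification --- though it is less informative, since the paper's route records the stronger fact that $\lambda$ is not even quasi-equivalent to any Type I representation. For weak equivalence, the paper simply adapts the proof of Theorem 3.1 of \cite{lpt}, whereas you chain the paper's own Theorem \ref{final} and Theorem \ref{weakeq} through the kernel identity $\ker\big(\int^{\oplus}_X\sigma_\omega\,d\omega\big)=\bigcap_{\omega\in X}\ker(\sigma_\omega)$, which makes the argument self-contained. One small point to tighten there: ``a.e.\ on $X$ upgrades to everywhere on $X$ by continuity'' presumes every conull subset of $X$ is dense in $X$, which need not hold for an arbitrary Borel weak $3\cD$-cross-section. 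The cleaner fix is to note that you only need the conull subset $X'=X\setminus N$: since $\bigcup_{(L,\ell)}3^\ell LX$ is conull in $\R^2$ (as is implicitly required for $\rho$ in Section 5 to be an isomorphism), $\bigcup_{(L,\ell)}3^\ell LX'$ is still conull, hence dense, so the argument of Theorem \ref{weakeq} applies verbatim to $\{U^\omega:\omega\in X'\}$ and gives $\bigcap_{\omega\in X'}\ker(U^\omega)=\ker(\lambda)$, which is all you need. With that adjustment both halves are airtight.
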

\begin{proof}
The proof of Theorem 3.1 in \cite{lpt} adapts
to this situation to show that $V$ is
weakly equivalent to the left regular
representation of 
$\Gamma_3\rtimes_{\vartheta}\Z$. We note that
$\Gamma_3\rtimes_{\vartheta}\Z$ has no abelian subgroup of finite index, as is easily verified, so Kaniuth's stronger version of
Thoma's Theorem given in \cite{Ka} shows that
the left regular representation of
$\Gamma_3\rtimes_{\vartheta}\Z$ is not
Type I. Since $V$ is Type I, they cannot be
equivalent.
\end{proof}

%%%%%%%%%%%%%%%%%%%%%%%%%%%%%%%%%%%%%%%%%%%%%%%%%%%%

\end{document}